\newcommand{\autorefre}[1]{\hyperref[re:#1]{\autoref*{#1}}}
\newlist{multistmt}{enumerate}{1}
\setlist[multistmt]{label={\upshape(\arabic*)}, nosep}
\newlist{propertylist}{enumerate}{1}
\setlist[propertylist]{label={\upshape(\roman*)}, nosep}
\newtheorem{theorem}{Theorem}[section]
\newcommand{\MakeTheoremAndCounter}[2]{\newaliascnt{#1}{theorem}
\newtheorem{#1}[#1]{#2}
\aliascntresetthe{#1}
\expandafter\providecommand\csname#1autorefname\endcsname{#2}}
\newtheorem*{restatemain}{\autoref{mainthm}}
\theoremstyle{definition}
\theoremstyle{remark}
\newtheorem*{remark}{Remark}
\newcommand{\modulo}[1]{~(\mathrm{mod}~#1)}
\DeclareMathOperator{\restrict}{\upharpoonright}
\DeclareMathOperator{\concat}{{}^{\smallfrown}}
\newcommand{\titleofthepaper}{A Haar meager set that is not strongly Haar meager}
\title{\titleofthepaper}
\author{M\'arton Elekes}
\address{Alfr\'ed R\'enyi Institute of Mathematics, Hungarian Academy of Sciences,
PO Box 127, 1364 Budapest, Hungary and E\"otv\"os Lor\'and
University, Institute of Mathematics, P\'azm\'any P\'eter s. 1/c,
1117 Budapest, Hungary}
\email{elekes.marton@renyi.mta.hu}
\urladdr{http://www.renyi.hu/$\sim$emarci}
\author{Don\'at Nagy}
\address{E\"otv\"os Lor\'and University, Institute of Mathematics, P\'azm\'any P\'eter s. 1/c, 1117 Budapest, Hungary}
\email{nagdon@bolyai.elte.hu}
\author{M\'ark Po\'or}
\address{E\"otv\"os Lor\'and University, Institute of Mathematics, P\'azm\'any P\'eter s. 1/c, 1117 Budapest, Hungary}
\email{sokmark@gmail.com}
\author{Zolt\'an Vidny\'anszky}
\address{Kurt G\"odel Research Center for Mathematical Logic, Universit\"at Wien, W\"ah\-rin\-ger Strasse 25, 1090 Wien, Austria}
\email{zoltan.vidnyanszky@univie.ac.at}
\thanks{All four authors were supported by the National Research, Development and Innovation Office -- NKFIH, grants no.~104178 and 124749. The first and fourth authors were also supported by the National Research, Development and Innovation Office -- NKFIH, grant no.~113047. The fourth author was also supported by FWF Grant P29999.\\
\includegraphics[height=1cm]{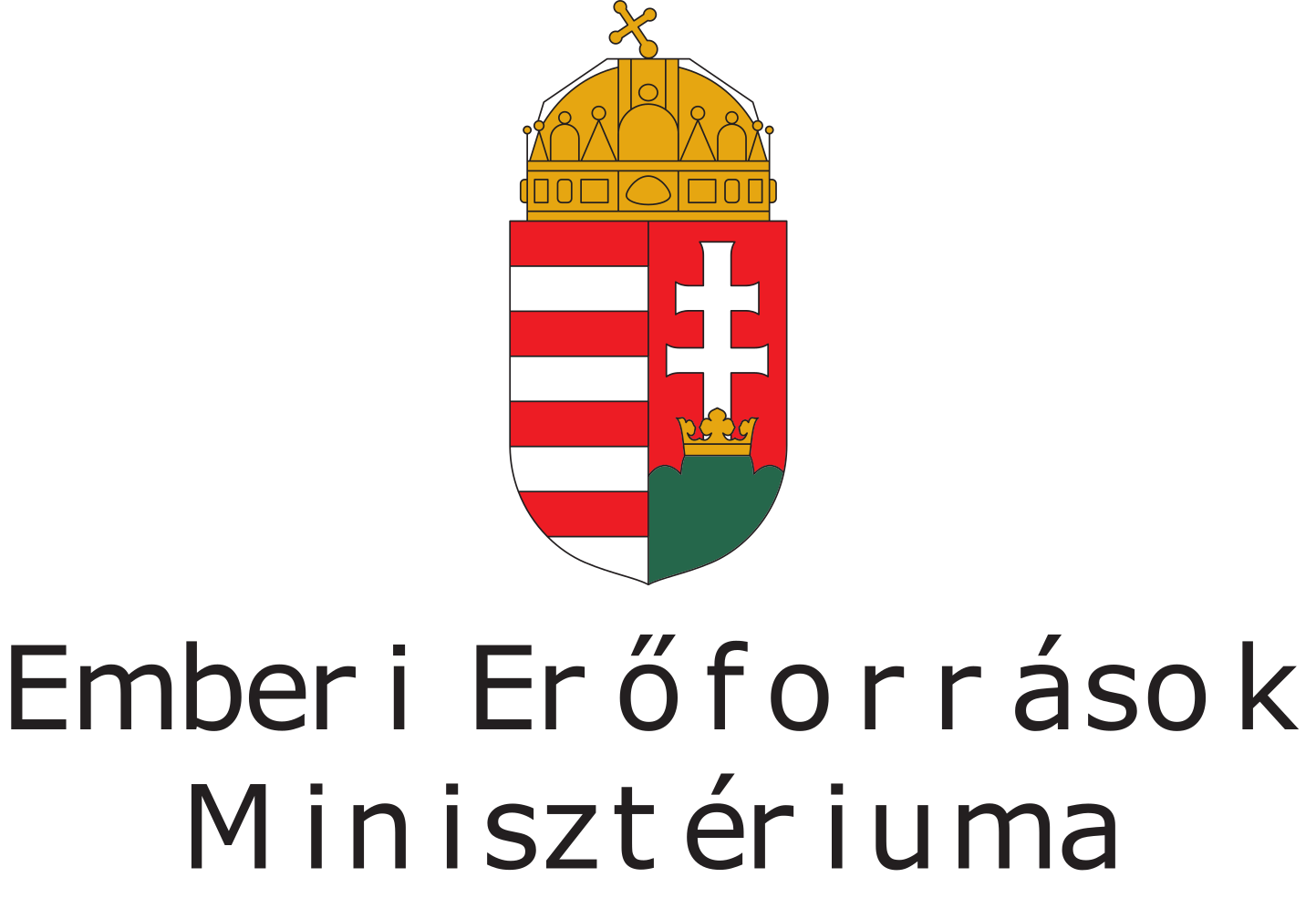} \raisebox{0.5cm}{\parbox[c]{11cm}{\scshape Supported by the \'UNKP-17-3 New National Excellence Program of the Ministry of Human Capacities.}}}
\subjclass[2010]{Primary 03E15; Secondary 54E52, 54H11}
\begin{document}
\begin{abstract}
Following Darji, we say that a Borel subset $B$ of an abelian Polish group $G$ is \emph{Haar meager} if there is a compact metric space $K$ and a continuous function $f : K \to G$ such that the preimage of the translate, $f^{-1}(B+g)$ is meager in $K$ for every $g \in G$. The set $B$ is called \emph{strongly Haar meager} if there is a compact set $C \subseteq G$ such that $(B+g) \cap C$ is meager in $C$ for every $g \in G$. The main open problem in this area is Darji's question asking whether these two notions are the same. Even though there have been several partial results suggesting a positive answer, in this paper we construct a counterexample. More specifically, we construct a $G_\delta$ set in $\mathbb{Z}^\omega$ that is Haar meager but not strongly Haar meager. We also show that no $F_\sigma$ counterexample exists, hence our result is optimal.
\end{abstract}

\maketitle

\tableofcontents

\section{Introduction}

The notion of Haar meager sets in Polish groups was first introduced by Darji in \cite{Da} in 2013 as a topological counterpart to the so-called Haar null sets. Although in this paper we will only study abelian Polish groups, we note that \cite{DRVV} have generalized this notion for arbitrary Polish groups. For a recent survey about the basic properties of Haar null sets and Haar meager sets, see \cite{EN}.

\begin{definition}\label{def:haarmeager}
Let $(G, +)$ be an abelian Polish group. A set $A \subseteq G$ is said to be \emph{Haar meager} if there are a Borel set $B \supseteq A$, a nonempty compact metric space $K$ and a continuous function $f : K \to G$ such that $f^{-1}(B+g)$ is meager in $K$ for every $g \in G$. A function $f$ satisfying this is called a \emph{witness function} for $A$.
\end{definition}

Haar meager sets form a $\sigma$-ideal which is contained in the $\sigma$-ideal of meager sets (see \cite[Theorems 2.9 and 2.2]{Da}). In a locally compact group, these two ideals coincide, but if $G$ is not locally compact, then there exists a closed meager set that is not Haar meager (see \cite[Corollary 2.5 and Example 2.6]{Da}).

When we prove that a set is Haar meager, it is often possible to use a witness function which is just the identity of $G$ restricted to some compact subset of $G$. This observation yields the following:

\begin{definition}\label{def:stronglyhaarmeager}
Let $(G, +)$ be an abelian Polish group. A set $A \subseteq G$ is said to be \emph{strongly Haar meager} if there are a Borel set $B \supseteq A$ and a nonempty compact set $C\subseteq G$ such that $(B+g)\cap C$ is meager in $C$ for every $g \in G$. A compact set $C$ satisfying this is called a \emph{witness set} for $A$.
\end{definition}

\begin{remark}
It is easy to see that a set $A\subseteq G$ is strongly Haar meager if and only if it is Haar meager and has an injective witness function.
\end{remark}

One of the most important questions in the topic is \cite[Problem 2]{Da}, which asks if these two notions are equivalent:

\begin{question}[Darji]\label{que:HMeqSHM}
Is every Haar meager set strongly Haar meager?
\end{question}

We remark that in the case of Haar null sets \cite[Theorem 4.3]{BGJS} answers the analogous question affirmatively.

To answer \autoref{que:HMeqSHM} affirmatively, it would have been a natural idea to prove that if $f: K\to G$ is a witness function for a Haar meager set, then the choice $C=f(K)$ satisfies the requirements of \autoref{def:stronglyhaarmeager}. However \cite[Example 11]{DRVV} ruled out this by constructing a Haar meager set where the image of one particular witness function does not satisfy the requirements of \autoref{def:stronglyhaarmeager}:

\begin{example}[Dole\v zal-Rmoutil-Vejnar-Vlas\' ak]\label{exa:shmproofproblem}
There exists a $G_\delta$ Haar meager set $A\subseteq \mathbb{R}$, a compact metric space $K$ and a witness function $f:K\to \mathbb{R}$ such that $A\cap f(K)$ is comeager in $f(K)$.
\end{example}

The result \cite[Theorem 5.13]{BGJS} shows that in a certain class of Polish groups the Haar meager sets and the strongly Haar meager sets coincide:

\begin{definition}
A topological group $G$ is called \emph{hull-compact} if each compact subset of $G$ is contained in a compact subgroup of $G$.
\end{definition}

\begin{theorem}[Banakh-G\l\k{a}b-Jab\l o\'nska-Swaczyna]\label{bgjsthm}
If the abelian Polish group $G$ is hull-compact, then every Haar meager subset of $G$ is strongly Haar meager.
\end{theorem}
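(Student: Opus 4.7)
The plan is as follows. Let $A \subseteq G$ be Haar meager and fix a Borel set $B \supseteq A$, a nonempty compact metric space $K$, and a continuous witness function $f : K \to G$. The image $f(K)$ is compact, so hull-compactness yields a compact subgroup $H \le G$ with $f(K) \subseteq H$. I will show that $C := H$ itself is a witness set for $A$, i.e.\ that $(B+g)\cap H$ is meager in $H$ for every $g \in G$.

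Since $H$ is a compact (hence Polish, hence Baire) abelian group, the Kuratowski--Ulam theorem is available on both $H$ and $K \times H$. Consider the continuous map
\[
\psi : K \times H \to H, \qquad \psi(k,h) = f(k) + h,
\]
which lands in $H$ because $f(K) \subseteq H$ and $H$ is a subgroup. Fix $g \in G$ arbitrarily and set $E_g := \psi^{-1}(B+g)$, a Borel subset of $K \times H$. For each fixed $h_0 \in H$ the horizontal slice at $h_0$ equals
\[
\{k \in K : f(k) + h_0 \in B+g\} = f^{-1}\bigl(B + (g - h_0)\bigr),
\]
which is meager in $K$ by the defining property of the witness function $f$. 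Hence Kuratowski--Ulam yields that $E_g$ is meager in $K \times H$.

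Applying Kuratowski--Ulam in the opposite direction to $E_g$, comeager many vertical slices of $E_g$ are meager in $H$; in particular there is some $k_0 \in K$ for which the slice
\[
E_g^{k_0} = \{h \in H : f(k_0) + h \in B+g\} = (B + g - f(k_0)) \cap H
\]
is meager in $H$. Since $f(k_0) \in H$ and $H$ is a subgroup, a direct calculation gives
\[
(B + g - f(k_0)) \cap H = \bigl((B+g) \cap H\bigr) - f(k_0),
\]
so by translation invariance of meagerness in the topological group $H$, the set $(B+g) \cap H$ is itself meager in $H$, as desired.

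The only real idea is the double application of Kuratowski--Ulam on $K \times H$: the compact subgroup $H$ supplied by hull-compactness plays a double role, acting both as a Baire space of extra translations (to upgrade the category information on $K$ to category information on $K \times H$) and as a carrier of translation-invariant meagerness (to read off the conclusion from one vertical slice). There is no serious obstacle to anticipate; the only points requiring care are verifying that $\psi$ actually takes values in the subgroup $H$ and carrying out the two slice identifications cleanly.
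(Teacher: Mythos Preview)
The paper does not give its own proof of this theorem; it simply quotes it as \cite[Theorem 5.13]{BGJS}. Your argument is correct and is exactly the natural one: use hull-compactness to trap $f(K)$ in a compact subgroup $H$, then run Kuratowski--Ulam on the Borel set $E_g=\psi^{-1}(B+g)\subseteq K\times H$ first in the $H$-direction (every slice is meager by the witness property of $f$) and then in the $K$-direction to extract a single $k_0$ whose $H$-slice is meager, and finally use that $f(k_0)\in H$ together with translation invariance in $H$ to conclude that $(B+g)\cap H$ is meager in $H$. All the bookkeeping checks out: $E_g$ is Borel (hence has the Baire property), $K$ and $H$ are second countable Baire spaces so Kuratowski--Ulam applies in both directions, and the identity $(B+g-f(k_0))\cap H=((B+g)\cap H)-f(k_0)$ is exactly where one uses that $H$ is a subgroup containing $f(k_0)$. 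There is nothing to compare against in the present paper, but your proof is presumably close in spirit to the original in \cite{BGJS}, since the paper later remarks that a slight modification of that proof yields the more general implication ``every compact set lies in a locally compact subgroup $\Rightarrow$ Haar meager $=$ strongly Haar meager''.
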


\begin{example}\label{bgjsexample}
It is not very hard to verify that the abelian Polish group $(\mathbb{Q}/\mathbb{Z})^\omega$ is hull-compact (where we endow $\mathbb{Q}$ with the discrete topology).
\end{example}

In this paper we answer the question of Darji negatively by constructing a counterexample in the group $\mathbb{Z}^\omega$:

\begin{theorem}\label{mainthm}
In the abelian Polish group $\mathbb{Z}^\omega$, there exists a $G_\delta$ set $R$ that is Haar meager but not strongly Haar meager.
\end{theorem}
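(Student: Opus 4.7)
My plan is to construct $R$ explicitly as a $G_\delta$ set via a block-by-block combinatorial scheme on $\mathbb{Z}^\omega$, exploiting that this group is not hull-compact (so \autoref{bgjsthm} does not obstruct us) and that its compact subsets have the simple form $C\subseteq\prod_n F_n$ for some finite $F_n\subseteq\mathbb{Z}$. Partition $\omega = \bigsqcup_n I_n$ into consecutive finite intervals, and for each $n$ choose a finite ``trap family'' $T_n \subseteq \mathbb{Z}^{I_n}$. A natural candidate is
\[
  R = \bigcap_N \bigcup_{n\ge N}\bigl\{x\in\mathbb{Z}^\omega : x\restrict I_n \in T_n\bigr\},
\]
consisting of sequences whose block restrictions meet $T_n$ for infinitely many $n$. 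The sizes of $I_n$ and the structure of $T_n$ must then be calibrated to satisfy two opposing requirements.

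For Haar meagerness, the witness $f \colon K \to \mathbb{Z}^\omega$ cannot be the inclusion of a product compact $\prod_n L_n \subseteq \mathbb{Z}^\omega$: a Baire-category computation shows that the meagerness of $f^{-1}(R+g)$ for every $g$ then forces $T_n \cap (L_n + g\restrict I_n) = \emptyset$ for cofinitely many $n$ simultaneously for every $g$, and an adversarial $g$ always defeats this. Hence $f$ must be chosen \emph{non-injective} — consistent with the remark that any injective witness would already yield strong Haar meagerness, and with the spirit of \autoref{exa:shmproofproblem}. I would arrange $f$ to mix coordinates across blocks so that $f(y)\restrict I_n$ depends on several earlier blocks of $y$; the clopen events $\{y : f(y)\restrict I_n \in T_n - g\restrict I_n\}$ can then be made sparse enough, relative to the cylinder structure of $K$, that their $\limsup$ is meager for every $g$.

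For the failure of strong Haar meagerness, given a compact $C \subseteq \prod_n F_n$, I would construct a translate $g_C \in \mathbb{Z}^\omega$ block by block so that for infinitely many $n$ the set $(T_n + g_C\restrict I_n) \cap \prod_{i\in I_n} F_i$ is non-empty; the matching Baire-category argument then shows $(R + g_C) \cap C$ is comeager on some non-empty cylinder of $C$, hence not meager in $C$. The freedom to choose $g_C$ coordinate-block by coordinate-block succeeds for every product profile $(F_n)_n$ precisely because the mixing in $f$ removes that same freedom inside $K$.

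The main obstacle is the simultaneous calibration of $(I_n, T_n, K, f)$: the traps $T_n$ must be sparse along the ``mixed'' directions $f$ cuts through $\mathbb{Z}^\omega$, yet dense-up-to-translation along every product profile $\prod_n F_n$. Resolving this tension requires the non-injectivity of $f$ to produce a genuine mismatch between preimage category in $K$ and image category in $f(K)$ (as in \autoref{exa:shmproofproblem}), together with a diagonal-style choice of $T_n$ rich enough to absorb every finite configuration via some translate. I expect the bulk of the actual argument to consist of this combinatorial balancing act, demanding that $|I_n|$, $|T_n|$, and the mixing of $f$ be chosen with considerable care; verifying the meagerness clause for \emph{all} $g \in \mathbb{Z}^\omega$, rather than generic ones, is the most delicate point.
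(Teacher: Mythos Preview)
Your proposal is a plan rather than a proof, and the plan has a genuine gap in the ``not strongly Haar meager'' direction. You write that, given a compact $C\subseteq\prod_n F_n$, you will choose $g_C$ block by block so that $(T_n+g_C\restrict I_n)\cap\prod_{i\in I_n}F_i\neq\emptyset$ for infinitely many $n$, and that ``the matching Baire-category argument then shows $(R+g_C)\cap C$ is comeager on some non-empty cylinder of $C$''. But $C$ is only \emph{contained} in the product; hitting the product profile on block $n$ says nothing about hitting $\{x\restrict I_n:x\in C\cap[s]\}$, which for an adversarial $C$ can be arbitrarily constrained and can depend on the cylinder $[s]$. The correct requirement for comeagerness in $C$ is that for \emph{every} cylinder $C\cap[s]$ and every $M$ there exist $n\ge M$ and $x\in C\cap[s]$ with $x\restrict I_n\in T_n+g_C\restrict I_n$; your stated condition does not yield this, and a single block's worth of freedom in $g_C\restrict I_n$ cannot in general meet infinitely many such cylinder constraints simultaneously. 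No calibration of $|I_n|$ and $|T_n|$ alone repairs this, because the difficulty is that $C$ is not determined by its block projections.

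The paper circumvents this by a device your framework does not contain. It first proves (\autoref{translateapart}) that there is a continuous $t:\mathcal{K}(\mathbb{Z}^\omega)\to\mathbb{Z}^\omega$ and a closed $F\subseteq\mathbb{Z}^\omega$ such that $K\mapsto K+t(K)$ is a homeomorphic copy of $\{(K,x):x\in K\}$ inside $F$, with the isolation property $(K+t(K)+\{-1,0,1\}^\omega)\cap F=K+t(K)$. The set $R$ is then $F$ minus a countable union of sets $T_s+C_{s\concat\ell}$, where $\{C_s\}_{s\in\omega^{<\omega}}$ is a tree of compact subsets of $\{0,1\}^\omega$ built from ``$m$-segmented'' sequences and $T_s=\{t:t+C_s\subseteq F\}$. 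For a given compact $K$ one takes $g=-t(K)$; the isolation of $K+t(K)$ inside $F$ then lets one argue entirely within that single translate, and the segmented structure of the $C_s$ forces a contradiction whenever a carefully constructed point of $K+t(K)$ is assumed to lie in some $T_s+C_{s\concat\ell}$. The Haar meagerness is witnessed not by ``mixing coordinates'' but by a projection $f_0:K_0\to\mathbb{Z}^\omega$ from a compact $K_0\subseteq\mathbb{Z}^\omega\times\{0,1\}^\omega$ assembled from the $C_s$'s, with the tree structure guaranteeing that for every $g$ one can find, below any node, a $C_{s'}$ disjoint from $R+g$. The encoding of all of $\mathcal{K}(\mathbb{Z}^\omega)$ into $\mathbb{Z}^\omega$ via $t$ is the missing idea; a block-$\limsup$ set cannot see individual compact sets in this way.
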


In \autoref{mincompl} we also prove that this counterexample is as simple as possible: every $F_\sigma$ Haar meager set is strongly Haar meager. We also note that in \autoref{rhm} we prove that our example is in fact a so-called Haar nowhere dense set.

As an additional motivation, we mention that according to part (2) of \cite[Theorem 13.8]{BGJS} the \emph{generic} variants of these notions (when we require not only one witness, but comeager many witnesses) do coincide:
\begin{theorem}[Banakh-G\l\k{a}b-Jab\l o\'nska-Swaczyna]
Assume that $G$ is an abelian Polish group and $B\subseteq G$ is a Borel set. Then the following are equivalent:
\begin{multistmt}
\item in the Polish space $\mathcal{C}(\{0,1\}^\omega, G)$ of continuous functions from $\{0,1\}^\omega$ to $G$ (endowed with the compact-open topology) the set
\[\{f\in \mathcal{C}(\{0,1\}^\omega, G) : \text{$f$ witnesses that $B$ is Haar meager}\}\quad\text{is comeager},\]
\item in the Polish space $\mathcal{K}(G)$ of nonempty, compact subsets of $G$
\[\{C\in \mathcal{K}(G) : \text{$C$ witnesses that $B$ is strongly Haar meager}\}\quad\text{is comeager}.\]
\end{multistmt}
\end{theorem}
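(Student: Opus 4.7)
The plan is to pivot on the natural continuous image map
\[
\Phi \colon \mathcal{C}(\{0,1\}^\omega, G)\to\mathcal{K}(G)\setminus\{\emptyset\}, \qquad \Phi(f)=f(\{0,1\}^\omega),
\]
which is surjective by the Alexandroff--Hausdorff theorem. Write $W_1\subseteq \mathcal{C}(\{0,1\}^\omega, G)$ and $W_2\subseteq\mathcal{K}(G)$ for the two sets of witnesses. The degenerate case where $G$ is discrete (equivalently countable, by the homogeneity of Polish groups) is immediate: continuous images of $\{0,1\}^\omega$ are then finite and the only meager clopen subset of $\{0,1\}^\omega$ is empty, so both (1) and (2) collapse to the same statement about $B$. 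Henceforth assume $G$ is nondiscrete, hence perfect.

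A standard Baire category argument shows that the set $\mathcal{E}\subseteq \mathcal{C}(\{0,1\}^\omega, G)$ of topological embeddings is comeager (for disjoint clopens $U, V\subseteq\{0,1\}^\omega$ and $n\in\omega$, $\{f : \dist(f(U), f(V))>1/n\}$ is dense open, density using that $G$ is perfect), and likewise the set $\mathcal{K}_{\mathrm{Cantor}}(G)$ of Cantor subsets of $G$ is a comeager $G_\delta$ in $\mathcal{K}(G)$. For $f\in\mathcal{E}$, $f$ is a homeomorphism onto $\Phi(f)$, so $f^{-1}(B+g)$ and $(B+g)\cap\Phi(f)$ are homeomorphic for every $g\in G$, and one is meager iff the other is. This gives the pointwise equivalence $f\in W_1\iff \Phi(f)\in W_2$ for $f\in\mathcal{E}$, whence $W_1\cap\mathcal{E} = \Phi^{-1}(W_2)\cap\mathcal{E}$.

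The remaining task is to show that $\Phi|_\mathcal{E}\colon \mathcal{E}\to\mathcal{K}_{\mathrm{Cantor}}(G)$ preserves and reflects comeagerness; combined with the two comeagerness statements above, this will yield $W_1$ comeager $\iff W_2$ comeager. The natural approach is to prove that $\Phi|_\mathcal{E}$ is an open map, from which category preservation in both directions is a standard consequence. Given $f_0\in\mathcal{E}$ with image $C_0$ and $\varepsilon>0$, it suffices to produce, for every Cantor set $C\subseteq G$ at sufficiently small Hausdorff distance from $C_0$, an embedding $f$ with $\Phi(f) = C$ and $\|f-f_0\|_\infty<\varepsilon$. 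The choice $f=\phi\circ f_0$ works for any homeomorphism $\phi\colon C_0\to C$ moving each point by less than $\varepsilon$, and such a $\phi$ is constructed by a recursive matching of finite clopen partitions of $C_0$ and $C$ whose pieces correspond in position and have controlled diameter. The main obstacle is precisely this \emph{uniform Cantor isomorphism lemma}: while it is classical that any two Cantor spaces are homeomorphic, here one must control $\sup_{y\in C_0} d(\phi(y), y)$ essentially linearly in $d_H(C_0, C)$, which requires a careful uniform variant of the standard back-and-forth construction delicately interleaving the Hausdorff metric on $\mathcal{K}(G)$ with the uniform metric on $\mathcal{C}(\{0,1\}^\omega, G)$.
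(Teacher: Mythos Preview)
This theorem is not proved in the present paper; it is quoted from \cite[Theorem~13.8(2)]{BGJS} at the end of the introduction purely as motivation, and no argument for it appears anywhere in the text. There is therefore no proof in the paper to compare your proposal against.

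On the proposal itself: the route via the image map $\Phi$, the genericity of embeddings in $\mathcal{C}(\{0,1\}^\omega,G)$ and of Cantor sets in $\mathcal{K}(G)$, and the pointwise equivalence $f\in W_1\iff \Phi(f)\in W_2$ on $\mathcal{E}$ are all correct and together form a natural skeleton. Two steps remain to be filled in. The first is the one you already flag: the ``uniform Cantor isomorphism lemma'' needed for openness of $\Phi|_{\mathcal{E}}$ is the real content, and while the recursive clopen-matching you describe is the right mechanism, the proposal does not actually carry it out. The second is more hidden: your claim that openness of a continuous surjection gives ``category preservation in both directions'' as a ``standard consequence'' is only automatic in one direction (preimages of comeager sets are comeager). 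For the other direction one needs, for instance, the Kuratowski--Ulam theorem for continuous open maps together with the observation that $W_1\cap\mathcal{E}$ has the Baire property (it is coanalytic) and is saturated for $\Phi$ (being invariant under precomposition by homeomorphisms of $\{0,1\}^\omega$). None of this is deep, but it is a genuine step your sketch omits.
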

In (1) we only considered the potential witness functions whose domain is the Cantor set $\{0,1\}^\omega$. This is a natural restriction, as according to \cite[Proposition 3]{DV}, every Haar meager set has a witness function whose domain is $\{0,1\}^\omega$.

\section{Preliminaries}

As usual, $\mathbb{N}$ and $\omega$ will both denote the set of nonnegative integers. We will use \enquote{$\mathbb{N}$} when we use this set as a topological space (with the discrete topology) and use \enquote{$\omega$} when we use it as an ordinal or index set.

We will use some notation related to sequences (i.e.\ functions $s$ whose domain is either $\omega$ or $\{0,1,\ldots, n-1\}$ for some natural number $n$). As usual, $s_k$ denotes the element of the sequence $s$ with index $k$ (i.e. $s_k=s(k)$ for an index $k$ that is in the domain of $s$). If $S$ is an arbitrary set, then $S^{<\omega}=\bigcup_{n\in\omega}S^n$ denotes the set of finite sequences of elements of $S$ and $\emptyset$ denotes the empty sequence $\emptyset\in S^0$. For $s\in S^{<\omega}$, $|s|$ denotes the length of $s$.

If $s$ and $s'$ are two sequences and $s$ is finite ($s'$ may be infinite), then $s\concat s'$ denotes concatenation of $s$ and $s'$. In particular if $s\in S^{<\omega}$ and $\ell \in S$ is an additional element, then $s\concat \ell$ denotes the sequence of length $|s|+1$ which consists of the elements of $s$ followed by $\ell$ as the last element. If $x$ is a (finite or infinite) sequence of length at least $n$, then $x\restrict n$ denotes the sequence formed by the first $n$ elements of $x$.

If $s, s'$ are two sequences then we say that $s\subseteq s'$ if there is a sequence $t$ such that $s' = s \concat t$ (if we consider functions and sequences as sets of pairs, this is just the usual inclusion relation). For a sequence $s\in S^{<\omega}$, let $[s]\subseteq S^{\omega}$ be the set of sequences which have $s$ as an initial segment, i.e.
\[[s]=\{x\in S^\omega : s\subseteq x\}.\]

We say that a set $A\subseteq S^{<\omega}$ is cofinal if for every $s\in S^{<\omega}$ there exists an $a\in A$ such that $s\subseteq a$ (i.e. $s$ is an initial segment of $a$).

We will use $\mathcal{K}(X)$ to denote the \emph{nonempty} compact sets of a space $X$, equipped with the Vietoris topology. The well-known result \cite[Theorem 4.25]{Ke} states that if $X$ is Polish, then $\mathcal{K}(X)$ is also Polish. We will also use the following fact:

\newcommand{\michaelref}{Michael \cite[4.13.1]{Mi}}
\begin{fact}[\michaelref]\label{kxzerodim}
If $X$ is a zero-dimensional Polish space (that is, $X$ is Polish and has a basis consisting of clopen sets), then $\mathcal{K}(X)$ is also zero-dimensional.
\end{fact}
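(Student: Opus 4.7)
The plan is to produce, for any basic Vietoris open neighbourhood of a compact set $K\in\mathcal{K}(X)$, a smaller neighbourhood of $K$ that is clopen in $\mathcal{K}(X)$. The Vietoris topology has as a basis the sets
\[\langle U_1,\ldots,U_n\rangle=\bigl\{L\in\mathcal{K}(X):L\subseteq U_1\cup\cdots\cup U_n\text{ and }L\cap U_i\neq\emptyset\text{ for }i=1,\ldots,n\bigr\},\]
with $U_1,\ldots,U_n$ open in $X$, so it suffices to work with these.

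The first step is the key lemma: whenever $V\subseteq X$ is \emph{clopen}, both of the subbasic Vietoris sets $\{L:L\subseteq V\}$ and $\{L:L\cap V\neq\emptyset\}$ are clopen in $\mathcal{K}(X)$. Indeed, $\{L:L\subseteq V\}$ is open by definition, and its complement is $\{L:L\cap(X\setminus V)\neq\emptyset\}$, which is also open because $X\setminus V$ is open. Symmetrically, $\{L:L\cap V\neq\emptyset\}$ is open by definition, with complement $\{L:L\subseteq X\setminus V\}$ open because $X\setminus V$ is open. Consequently any finite intersection of such subbasic sets with clopen arguments is clopen, and in particular every $\langle V_1,\ldots,V_n\rangle$ with all $V_i$ clopen is clopen in $\mathcal{K}(X)$.

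Now given $K\in\langle U_1,\ldots,U_n\rangle$ with $U_i$ merely open, I use zero-dimensionality of $X$ to shrink the $U_i$ to clopen sets while keeping $K$ inside the neighbourhood. Concretely, for each $x\in K$, pick some index $i(x)$ with $x\in U_{i(x)}$ and a clopen neighbourhood $W_x\subseteq U_{i(x)}$ of $x$; by compactness of $K$ finitely many $W_{x_1},\ldots,W_{x_m}$ cover $K$. For each $i\le n$ set $V_i$ to be the union of those $W_{x_j}$ whose $i(x_j)=i$, together with an additional small clopen neighbourhood around one fixed point of $K\cap U_i$ contained in $U_i$ (to ensure $K\cap V_i\neq\emptyset$ is preserved); each $V_i$ is clopen, contained in $U_i$, and $K\in\langle V_1,\ldots,V_n\rangle\subseteq\langle U_1,\ldots,U_n\rangle$. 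By the previous paragraph $\langle V_1,\ldots,V_n\rangle$ is clopen, so $K$ has a clopen neighbourhood basis inside the Vietoris topology.

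The only delicate point is arranging the meets $K\cap V_i\neq\emptyset$ simultaneously with $K\subseteq\bigcup V_i$; this is handled by the bookkeeping above and is purely routine. Polishness of $\mathcal{K}(X)$ is already recorded, and combining it with the clopen basis just produced yields that $\mathcal{K}(X)$ is a zero-dimensional Polish space.
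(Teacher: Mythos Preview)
Your proof is correct. The paper does not give its own proof of this fact; it is simply quoted from Michael~\cite[4.13.1]{Mi}, so there is no argument in the paper to compare against. Your argument is the standard direct one: the subbasic Vietoris sets built from a clopen $V$ are clopen in $\mathcal{K}(X)$, hence so is any $\langle V_1,\ldots,V_n\rangle$ with clopen $V_i$, and a compactness-plus-zero-dimensionality refinement produces such a clopen basic neighbourhood inside any given $\langle U_1,\ldots,U_n\rangle$. The bookkeeping you sketch (adding one extra clopen patch inside each $U_i$ to guarantee $K\cap V_i\neq\emptyset$) works exactly as you indicate.
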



A standard reference book for notions in descriptive set theory is \cite{Ke}.

\section{Translating the compact sets apart}

This section is motivated by the results and ideas in \cite{Sfst,Ssnd,EV}. These papers prove slightly weaker claims than our \autoref{translateapart}, but work in a more general setting. (Our proof relies on the structure of $\mathbb{Z}^\omega$ to make the calculations shorter and simpler.)

Let \[H=\{(K,x) \in \mathcal{K}(\mathbb{Z}^\omega) \times \mathbb{Z}^\omega: x \in K\}.\]

Note that $H$ is a closed set in the product space $\mathcal{K}(\mathbb{Z}^\omega) \times \mathbb{Z}^\omega$.

\begin{theorem}\label{translateapart}
There exists a map $t: \mathcal{K}(\mathbb{Z}^\omega) \to  \mathbb{Z}^\omega$ so that the map $T: H \to \mathbb{Z}^\omega$ defined by \[T(K,x)=x+t(K)\] is a homeomorphism between $H$ and the set 
\[F=T(H) = \bigcup_{K\in\mathcal{K}(\mathbb{Z}^\omega)} (K+t(K)),\]
where the union is disjoint. Moreover, $F$ is a closed subset of $\mathbb{Z}^\omega$ and satisfies that
\[(K+t(K)+\{-1,0,1\}^\omega)\cap F = K+t(K)\]
for each $K\in\mathcal{K}(\mathbb{Z}^\omega)$.
\end{theorem}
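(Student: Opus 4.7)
My plan is to define $t$ coordinatewise as a locally constant (hence continuous) function, crafted so that for different $K$ the translates $K + t(K)$ land in regions of $\mathbb{Z}^\omega$ separated by a gap of size at least $2$ in some coordinate. The basic tool is the zero-dimensionality of $\mathcal{K}(\mathbb{Z}^\omega)$ from \autoref{kxzerodim}.

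First I would verify that the maps $M_n \colon \mathcal{K}(\mathbb{Z}^\omega) \to \omega$ with $M_n(K) = \max\{|x_n| : x \in K\}$ and $R_n \colon \mathcal{K}(\mathbb{Z}^\omega) \to \omega$ encoding the value $K \restrict n$ via a fixed enumeration of finite subsets of $\mathbb{Z}^n$ are continuous; their level sets are clopen in the Vietoris topology (for instance $\{K : M_n(K) \leq c\} = \{K : K \subseteq \mathbb{Z}^n \times [-c, c] \times \mathbb{Z}^\omega\}$). Let the finite \emph{address} of $K$ at stage $n$ be $\alpha_n(K) = (R_0(K), \dots, R_n(K), M_0(K), \dots, M_n(K))$, which takes countably many values, each on a clopen subset of $\mathcal{K}(\mathbb{Z}^\omega)$. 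I would then set $t(K)_n := \tau_n(\alpha_n(K))$ for an integer-valued function $\tau_n$ on addresses, arranged so that for any two distinct addresses $a \neq a'$ in the range of $\alpha_n$, the gap $|\tau_n(a) - \tau_n(a')|$ exceeds the sum of their $M_n$-components plus $1$. This is easy to engineer: enumerate the addresses and multiply by a spacing factor that grows rapidly with the $M_n$-component. Since the level sets of $\alpha_n$ are clopen, each $t(K)_n$ is locally constant, so $t$ is continuous; moreover one obtains the \emph{key separation property}: for $K \neq K'$, taking $n$ to be the smallest index with $\alpha_n(K) \neq \alpha_n(K')$, one has $|x_n + t(K)_n - x'_n - t(K')_n| \geq 2$ for every $x \in K$ and $x' \in K'$.

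From the key property the four conclusions follow routinely. If $y \in (K + t(K) + \{-1, 0, 1\}^\omega) \cap F$, write $y = x + t(K) + v = x' + t(K')$ with $v \in \{-1, 0, 1\}^\omega$; the key property forces $K = K'$ and hence $y \in K + t(K)$, giving both the disjointness of the translates and the $\{-1, 0, 1\}^\omega$-separation identity. Continuity of $T$ is immediate from that of $t$, and bijectivity onto $F$ is from disjointness. For closedness of $F$ and continuity of $T^{-1}$: given $y_m = x_m + t(K_m) \to y$, the inequality $|y_{m,n} - \tau_n(\alpha_n(K_m))| \leq M_n(K_m)$ combined with the rapid growth of $\tau_n$ in its $M_n$-component confines $\alpha_n(K_m)$ to finitely many values for each fixed $n$. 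A diagonal argument then yields a subsequence along which $\alpha_n(K_m)$ stabilizes for every $n$, producing a limit $K^*$ with $K_m \to K^*$, so $y = x + t(K^*) \in F$; the separation forces the whole sequence eventually into $K^* + t(K^*)$, giving continuity of $T^{-1}$. The main obstacle is the bookkeeping needed to choose the $\tau_n$'s so that the spacing inequality holds uniformly across addresses with varying $M_n$-components, forcing $\tau_n$ to grow very fast in that argument; this is the only technically delicate step, but poses no real analytic difficulty since $\tau_n$ is allowed to grow arbitrarily fast.
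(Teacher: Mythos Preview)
Your approach is correct and follows the same high-level plan as the paper---define $t(K)$ coordinatewise so that distinct compact sets land in well-separated regions---but the encodings differ. The paper invokes \autoref{kxzerodim} to fix an embedding $c\colon\mathcal{K}(\mathbb{Z}^\omega)\to\{-1,1\}^\omega$ and then sets $t(K)_n=3\,b(K)_n\,c(K)_n$ with $b(K)_n=M_n(K)+1$; the key observation is that the \emph{sign} of $(x+t(K))_n$ already recovers $c(K)_n$, so injectivity of $T$ and the $\{-1,0,1\}^\omega$-separation each become one-line checks, and closedness reduces to showing the $K^{(m)}$ lie in a common compact subspace of $\mathcal{K}(\mathbb{Z}^\omega)$. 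Your version instead encodes $K$ in the \emph{magnitude} of $t(K)_n$ via the locally constant address $\alpha_n$ and a spacing function $\tau_n$; this bypasses the abstract embedding (indeed you never really use \autoref{kxzerodim}, since your clopen level sets come directly from the product structure), at the cost of the bookkeeping you mention: arranging that the intervals $[\tau_n(a)-M_n(a),\tau_n(a)+M_n(a)]$ are pairwise $2$-separated and that only finitely many meet any bounded window. Both routes finish the closedness and continuity of $T^{-1}$ by the same subsequence argument. The paper's sign trick is shorter and yields an explicit formula for $t$; your construction is more hands-on and makes the ``locally constant'' nature of the coding explicit.
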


\begin{proof}
As \autoref{kxzerodim} states that $\mathcal{K}(\mathbb{Z}^\omega)$ is zero-dimensional, we may apply \cite[Theorem 7.2]{Ke} to get an embedding
\[c: \mathcal{K}(\mathbb{Z}^\omega) \to \{-1,1\}^\omega.\]
Define the (clearly continuous) function
\[b:\mathcal{K}(\mathbb{Z}^\omega) \to \omega^\omega,\qquad b(K)_n=\max\{\lvert x_n\rvert : x\in K\}+1.\]

For each $n\in \omega$ let
\[t(K)_n=3\cdot b(K)_n\cdot c(K)_n.\]
It is clear that both $t$ and
\[T : H \to \mathbb{Z}^\omega,\qquad T(K, x) = x +t(K)\]
are continuous.

\begin{fact}\label{ttriv}
Assume that $(K, x)\in H$, $y = T(K, x)$ and $n\in\omega$. Then it is easy to verify that
\begin{multistmt}
\item $\lvert y_n\rvert > b(K)_n$ and
\item \(\displaystyle c(K)_n = \begin{cases} +1 & \text{if $y_n>0$,}\\ -1 & \text{if $y_n<0$.}\end{cases}\)
\end{multistmt}
\end{fact}

\begin{claim}
$T$ is injective.
\end{claim}
\begin{proof}
Assume that $(K,x), (K',x')\in H$ and $T(K,x)=T(K',x')$. Using \autoref{ttriv} part (2), $c(K)=c(K')$, but then $K=K'$, as $c$ is injective. This also implies that
\[x = T(K, x) - t(K) = T(K', x')- t(K')=x'.\qedhere\]
\end{proof}

\begin{claim}
$F=T(H)$ is closed and the map $T^{-1} : F \to H$ is continuous.
\end{claim}
\begin{proof}
Assume that $y^{(m)}\in F$ for each $m\in\omega$ and this sequence converges to some $y^*\in\mathbb{Z}^\omega$. As $F=T(H)$, there are compact sets $K^{(m)}\in\mathcal{K}(\mathbb{Z}^\omega)$ and sequences $x^{(m)}\in K^{(m)}$ such that $y^{(m)} = T(K^{(m)}, x^{(m)})$ for each $m\in\omega$. It is sufficient to prove that $\big((K^{(m)}, x^{(m)})\big)_{m\in\omega}$ converges to some $(K^*, x^*)\in H$. (If this holds, then $y^*=T(K^*, x^*)\in F$ also demonstrates that $F$ is closed.)

Using \autoref{ttriv} part (2) and the convergence of $y^{(m)}$ yields that $\big(c(K^{(m)})\big)_{m\in\omega}$ converges to some element $\gamma\in\{-1,1\}^\omega$. Our next step is to prove that $\gamma$ is contained in the image of $c$.

Notice that 
\[f: \omega\to\omega,\qquad f(n) = \sup_{m\in\omega} \lvert y^{(m)}_n\rvert\]
is a well-defined function, because for each $n\in\omega$ the sequence $\big(\lvert y^{(m)}_n\rvert\big)_{m\in\omega}$ is convergent and therefore bounded. Using \autoref{ttriv} part (1),
\[f(n)\ge \lvert y^{(m)}_n\rvert> b(K^{(m)})_n\qquad\text{for each $n, m\in\omega$.}\]
Applying the definition of $b$, this implies that for each $m\in\omega$,
\[K^{(m)}\in\mathcal{K}(\underbrace{\{z \in\mathbb{Z}^\omega : \lvert z_n\rvert < f(n)\text{ for each $n\in\omega$}\}}_\text{compact}).\]
As the (nonempty) compact subsets of a compact space form a compact space themselves, $\big(K^{(m)}\big)_{m\in\omega}$ has a subsequence that converges to some compact set $K^*$. Applying this, the continuity of $c$ and the fact that $\lim_{m\in\omega} c(K^{(m)}) = \gamma$ exists, we obtain that
\[\gamma = \lim_{m\in\omega} c(K^{(m)})= c(K^*).\]
As $c$ was an embedding, this implies that $K^{(m)}$ converges to $K^*$ (when $m\to\infty$).

As $t$ is continuous and $y^{(m)}$ is convergent, this implies that $x^{(m)} = y^{(m)}- t(K^{(m)})$ is also convergent to some $x^* \in \mathbb{Z}^\omega$. Finally $(K^*, x^*)\in H$ follows from the fact that $(K^{(m)}, x^{(m)})\to (K^*, x^*)$ and $H$ is a closed set.
\end{proof}

This claim implies that $F$ is closed and $T$ is a homeomorphism between $H$ and $F$. It is clear that
\[F = \bigcup_{K\in\mathcal{K}(\mathbb{Z}^\omega)} (K+t(K)).\]

To conclude the proof, we will fix an arbitrary $K\in\mathcal{K}(\mathbb{Z}^\omega)$ and show that
\[(K+t(K)+\{-1,0,1\}^\omega)\cap F = (K+t(K)).\]
Fix an arbitrary $z\in (K+t(K)+\{-1,0,1\}^\omega)\cap F$. There are $x\in K$, $\varepsilon \in \{-1,0,1\}^\omega$, $K'\in\mathcal{K}(\mathbb{Z}^\omega)$ and $x'\in K'$ such that
\[z = \underbrace{x + t(K)}_{=H(K,x)} + \varepsilon = \underbrace{x' + t(K')}_{=H(K', x')}.\]
Using both parts of \autoref{ttriv} and the fact that $b(K)_n\ge 1$ for each $n\in\omega$ it is easy to verify that
\[c(K)_n = 1 \quad\Leftrightarrow\quad z_n >0 \quad\Leftrightarrow\quad c(K')_n = 1\quad\text{for each $n\in\omega$}.\]
As $c$ is injective, this means that $K' = K$, so $z \in K+t(K)$, as claimed.
\end{proof}

\begin{remark}
An analogous proof would work in the case when $\{-1,0,1\}^\omega$ is replaced in the statement by any other compact subset $C\subseteq\mathbb{Z}^\omega$ that contains the all zero sequence.
\end{remark}

\section{Construction of the witness function}
The goal of this section is to construct a compact metric space $K_0$ and a function $f_0 : K_0\to \mathbb{Z}^\omega$ that will witness the Haar meagerness of our example. In order to do this, we will introduce some structures and prove elementary claims about their properties.

We say that a sequence $s$ is \emph{$m$-segmented} if it is the concatenation of constant sequences of length $2^m$. More formally, this means the following:

\begin{definition}
Let $S$ be an arbitrary set and $m\in\mathbb{N}$ be a nonnegative integer. An infinite sequence $s\in S^\omega$ is $m$-segmented if $s_{q \cdot 2^m + r_1} = s_{q\cdot 2^m + r_2}$ for all integers $q\in \mathbb{N}$ and $0\le r_1, r_2 < 2^m$. A finite sequence $s\in S^{<\omega}$ is $m$-segmented if $\lvert s \rvert = Q\cdot 2^m$ for some $Q\in\mathbb{N}$ and $s_{q \cdot 2^m + r_1} = s_{q\cdot 2^m + r_2}$ for all integers $0\le q <Q$ and $0\le r_1, r_2 < 2^m$.
\end{definition}

Notice that an $m$-segmented sequence is also $m'$-segmented for all $0\le m'\le m$.

\begin{definition}\label{bsdef}
Let us fix a sequence $b_s\in \{0, 1\}^{<\omega}$ for each $s\in\omega^{<\omega}$ in a way that it satisfies the following properties:
\begin{multistmt}
\item $b_\emptyset = \emptyset$,
\item $\lvert b_s\rvert$ is divisible by $2^{\lvert s \rvert}$,
\item if $s\in\omega^{<\omega}$ and $\beta$ is a nonempty finite $\lvert s\rvert$-segmented sequence such that $\lvert b_s\rvert + \lvert \beta\rvert$ is divisible by $2^{\lvert s\rvert +1}$, then there is exactly one $\ell\in\omega$ such that $b_{s\concat \ell} = b_s\concat \beta$,
\item conversely, if $s\in\omega^{<\omega}$ and $\ell\in\omega$, then $b_{s\concat\ell}$ can be written as $b_{s\concat \ell} = b_s\concat \beta$ where $\beta$ is a nonempty finite $\lvert s\rvert$-segmented sequence satisfying that $\lvert b_s\rvert + \lvert \beta\rvert$ is divisible by $2^{\lvert s\rvert +1}$.
\end{multistmt}
\end{definition}
It is clear that using recursion we can choose a system $\{b_s\}_{s\in\omega^{<\omega}}$ that satisfies these properties.

\begin{definition}\label{csdef}
Define the set $C_s\subseteq \{0,1\}^\omega\subseteq \mathbb{Z}^\omega$ by
\[C_s = \{ b_s\concat x : x\in\{0,1\}^\omega\text{ and $x$ is $\lvert s\rvert$-segmented}\}.\]
\end{definition}

\begin{fact}\label{cstriv}
The sets $C_s$ have the following properties:
\begin{multistmt}
\item $C_\emptyset= \{0,1\}^\omega$,
\item $C_s$ is compact for each $s\in\omega^{<\omega}$,
\item if $s\subseteq s'$, then $C_s\supseteq C_{s'}$.
\end{multistmt}
\end{fact}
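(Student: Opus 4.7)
I will prove each of the three properties in turn, relying on the explicit defining conditions of the sequences $\{b_s\}$ from \autoref{bsdef}.

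For part (1): since $b_\emptyset = \emptyset$ by property (i) of \autoref{bsdef}, unpacking \autoref{csdef} reduces the claim to showing that every $x\in\{0,1\}^\omega$ is $0$-segmented. This is immediate from the definition of $m$-segmented with $m=0$, because the condition involves only indices $r_1,r_2$ with $0\le r_1,r_2<2^0=1$, forcing $r_1=r_2=0$, so the condition holds vacuously.

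For part (2): since $C_s\subseteq\{0,1\}^\omega$ and the latter is compact, it suffices to show that $C_s$ is closed. The condition of having $b_s$ as an initial segment is clopen in $\{0,1\}^\omega$, and the condition of being $|s|$-segmented after position $|b_s|$ is a countable intersection of conditions of the form $\{z:z_i=z_j\}$, each of which is clopen. Hence $C_s$ is closed, and therefore compact.

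For part (3): by transitivity it is enough to handle the one-step case, i.e., to show $C_{s\concat\ell}\subseteq C_s$ for every $s\in\omega^{<\omega}$ and $\ell\in\omega$. Fix $y\in C_{s\concat\ell}$, so $y=b_{s\concat\ell}\concat x$ for some $(|s|+1)$-segmented $x\in\{0,1\}^\omega$. Property (iv) of \autoref{bsdef} writes $b_{s\concat\ell}=b_s\concat\beta$ for a finite $|s|$-segmented sequence $\beta$ with $|b_s|+|\beta|$ divisible by $2^{|s|+1}$, so in particular $|\beta|$ is divisible by $2^{|s|}$ (using property (ii) for $|b_s|$). The main point will be to verify that $z:=\beta\concat x$ is $|s|$-segmented as an infinite sequence, which would yield $y=b_s\concat z\in C_s$.

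To verify $|s|$-segmentedness of $z$, I split the index range of $z$ into the initial block of length $|\beta|$ and the tail. On the initial block, $z$ agrees with $\beta$, which is $|s|$-segmented by hypothesis, and the segmentation windows of length $2^{|s|}$ align because $|\beta|$ is a multiple of $2^{|s|}$. On the tail, $z$ agrees with $x$, and since $x$ is $(|s|+1)$-segmented it is in particular $|s|$-segmented, and again the windows align because the tail begins at an index that is a multiple of $2^{|s|}$. Hence $z$ is $|s|$-segmented, completing the argument. No step is genuinely difficult; the only bookkeeping to be careful about is the divisibility of the various lengths, which is exactly what properties (ii) and (iii)/(iv) of \autoref{bsdef} are designed to guarantee.
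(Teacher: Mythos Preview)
Your proposal is correct and follows essentially the same approach as the paper, which simply declares (1) and (2) trivial and says (3) follows by induction on $|s'|-|s|$ using property (4) of \autoref{bsdef}; you have merely written out this one-step induction in full. One minor notational point: the properties in \autoref{bsdef} are labeled with Arabic numerals (1)--(4), not Roman numerals, so your references to ``property (i)'', ``property (ii)'', ``property (iv)'' should be adjusted accordingly.
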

\begin{proof} Properties (1) and (2) are trivial, property (3) can be proved by using induction on the value of $(\lvert s'\rvert -\lvert s\rvert)$ and applying property (4) of \autoref{bsdef}.
\end{proof}

\begin{claim}\label{csprop}
Assume that $s\in\omega^{<\omega}$ and $U$ is a nonempty relatively open subset of $C_s$. Then there are infinitely many indices $\ell\in\omega$ such that $C_{s\concat\ell}\subseteq U$.
\end{claim}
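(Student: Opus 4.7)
The strategy is to use the near-bijective correspondence between admissible extensions $\beta$ of $b_s$ and child indices $\ell\in\omega$ given by parts (3) and (4) of \autoref{bsdef}: every admissible $\beta$ corresponds to a unique $\ell$ with $b_{s\concat\ell}=b_s\concat\beta$. Hence to produce infinitely many $\ell$ with $C_{s\concat\ell}\subseteq U$, it suffices to build infinitely many admissible $\beta$'s that extend a prescribed initial segment ensuring $C_{s\concat\ell}$ lands inside a basic neighborhood already contained in $U$.

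First I would fix any $u\in U$ and write $u=b_s\concat x$ with $x\in\{0,1\}^\omega$ being $\lvert s\rvert$-segmented. Since $U$ is relatively open in $C_s$, there exists $N\geq\lvert b_s\rvert$ with $2^{\lvert s\rvert}\mid(N-\lvert b_s\rvert)$ such that every $v\in C_s$ satisfying $v\restrict N=u\restrict N$ lies in $U$; write $\beta_0=x\restrict(N-\lvert b_s\rvert)$, which is an $\lvert s\rvert$-segmented finite binary sequence. Now for any $\lvert s\rvert$-segmented finite binary sequence $\beta'$ for which $\beta:=\beta_0\concat\beta'$ is nonempty and $\lvert b_s\rvert+\lvert\beta\rvert$ is divisible by $2^{\lvert s\rvert+1}$, part (3) of \autoref{bsdef} provides a unique $\ell\in\omega$ with $b_{s\concat\ell}=b_s\concat\beta$; every element of $C_{s\concat\ell}$ then begins with $b_s\concat\beta_0$ (so agrees with $u$ on the first $N$ coordinates) and lies in $C_s$ by \autoref{cstriv}(3), and therefore lies in $U$. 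Thus $C_{s\concat\ell}\subseteq U$ for every such $\beta'$.

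It remains to produce infinitely many admissible $\beta'$, which is the easiest part and not a real obstacle. Since $\lvert b_s\rvert+\lvert\beta_0\rvert=N$ is already a multiple of $2^{\lvert s\rvert}$, the admissible lengths $\lvert\beta'\rvert$ form an infinite arithmetic progression of multiples of $2^{\lvert s\rvert}$ with common difference $2^{\lvert s\rvert+1}$ (the progression being shifted by $0$ or $2^{\lvert s\rvert}$ depending on whether $N$ is divisible by $2^{\lvert s\rvert+1}$), and distinct $\beta'$'s produce distinct $\beta$'s and hence distinct $\ell$'s by uniqueness in \autoref{bsdef}(3). The only minor subtlety is handling nonemptiness of $\beta$ when $\beta_0$ happens to be empty (the case $N=\lvert b_s\rvert$), which at worst rules out the single term $\lvert\beta'\rvert=0$ of the progression.
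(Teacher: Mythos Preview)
Your proof is correct and follows essentially the same approach as the paper: pick $u\in U$, pass to a basic cylinder neighborhood $C_s\cap[u\restrict N]\subseteq U$ with $N$ suitably divisible, and then invoke property~(3) of \autoref{bsdef} to manufacture infinitely many $\ell$ with $b_{s\concat\ell}\supseteq u\restrict N$, so that $C_{s\concat\ell}\subseteq C_s\cap[u\restrict N]\subseteq U$. The only (inessential) difference is that the paper obtains its admissible $\beta$'s by taking longer and longer initial segments of $x$ itself (so $\beta=x\restrict(n-\lvert b_s\rvert)$ for $n$ running through multiples of $2^{\lvert s\rvert+1}$), whereas you fix one $\beta_0$ and then append arbitrary $\lvert s\rvert$-segmented tails $\beta'$; both yield infinitely many distinct $\ell$'s for the same reason.
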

\begin{proof}
Fix an arbitrary element $u\in U$. As $U$ is relatively open, there exists an $n_0\in\omega$ such that $C_s\cap [u\restrict n]\subseteq U$ for each $n\ge n_0$. We may also assume that $n_0> \lvert b_s\rvert$.

Let us define the infinite set
\[N= \{n\in\omega : n \ge n_0\text{ and } 2^{\lvert s\rvert +1}\text{ divides }n\}\] 
and consider an arbitrary $n\in N$. 

As $U\subseteq C_s$, the sequence $u$ can be written as $b_s\concat x$ where $x\in\{0,1\}^\omega$ is an $\lvert s\rvert$-segmented sequence. Using property (2) of \autoref{bsdef}, $2^{\lvert s\rvert}$ divides $n-\lvert b_s\rvert$, therefore it is clear that $\beta = x \restrict (n-\lvert b_s\rvert)$ is $\lvert s\rvert$-segmented. Also notice that $\beta$ is nonempty because $n\ge n_0>\lvert b_s \rvert$ and $\lvert b_s\rvert +\lvert \beta \rvert = n$ is divisible by $2^{\lvert s\rvert +1}$, and thus by property (3) of \autoref{bsdef}, there exists an index $\ell_n\in\omega$ such that
\[u\restrict n = b_s\concat (x\restrict (n-\lvert b_s\rvert)) = b_s\concat\beta = b_{s\concat\ell_n}.\]

Clearly $C_{s\concat\ell_n}\subseteq [b_{s\concat\ell_n}]$ and according to property (3) of \autoref{cstriv} $C_{s\concat\ell_n}\subseteq C_s$, therefore $C_{s\concat\ell_n}\subseteq U$. This is sufficient, because property (3) of \autoref{bsdef} implies that $\ell_n\neq \ell_{n'}$ if $n\neq n'$.
\end{proof}

We will also use the following encoding of $\omega^{<\omega}$ in $\{0,1\}^{<\omega}$:
\begin{definition}\label{hsdef}
For $n\in\omega$, let $h(n) = (0, 0, \ldots, 0, 1)\in \{0,1\}^{n+1}$ be the sequence consisting of $n$ zeroes and then \enquote{1} as the last element. Generalizing this, for a finite sequence $s=(s_0, s_1, \ldots, s_{k-1})\in\omega^{<\omega}$, let $h(s)=h(s_0)\concat h(s_1)\concat \ldots \concat h(s_{k-1})$.

Let $h_0(s)\in\{0,1\}^\omega$ denote the infinite sequence $h(s)\concat (0,0, \ldots)$ (that is, $h(s)$ extended to infinite length by appending zeroes).
\end{definition}

\begin{fact}\label{hsprop}
The functions $h: \omega^{<\omega}\to \{0,1\}^{<\omega}$ and $h_0: \omega^{<\omega}\to \{0,1\}^\omega$ have the following properties:
\begin{multistmt}
\item $h$ and $h_0$ are both injective,
\item if $s, s'\in\omega^{<\omega}$, then
\[s\subseteq s' \quad\Leftrightarrow\quad [h(s)] \supseteq [h(s')] \quad\Leftrightarrow\quad [h(s)]\owns h_0(s'),\]
\item if $s\in\omega^{<\omega}$, $k\in\omega$ and $\ell\in\omega$ is large enough, then $[h(s\concat \ell)] \subseteq [h_0(s)\restrict k]$.
\end{multistmt}
\end{fact}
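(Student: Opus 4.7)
The plan is to exploit the very simple structure of $h$: each $h(n)$ is a unary-with-terminator encoding ($n$ zeroes followed by a $1$), so $h(s)$ for nonempty $s$ is a concatenation of such blocks and in particular it ends with a $1$, and the positions of the $1$'s in $h(s)$ are exactly the block boundaries. From $h(s)$ we can therefore parse the blocks from left to right and recover $s_0, s_1, \ldots$ uniquely. This one observation does most of the work.

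First I would handle (1). Injectivity of $h$ is exactly the parsing remark above. For $h_0$, the key point is that, if $s\ne\emptyset$, then the \emph{last} $1$ in $h_0(s) = h(s)\concat(0,0,\ldots)$ lies at position $|h(s)|-1$ (because $h(s)$ ends with $1$ and the tail of $h_0(s)$ is all zero). Hence from $h_0(s)$ we can read off $|h(s)|$, then $h(s)$, and then $s$ by the injectivity of $h$; the degenerate case $s=\emptyset$ gives $h_0(\emptyset)=(0,0,\ldots)$, which is distinguishable as the unique image with no $1$.

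Next I would prove (2). The equivalence $s\subseteq s'\Leftrightarrow [h(s)]\supseteq [h(s')]$ reduces, by definition of $[\cdot]$, to $s\subseteq s'\Leftrightarrow h(s)\subseteq h(s')$. The $(\Rightarrow)$ direction is immediate from $h(s\concat t)=h(s)\concat h(t)$. For $(\Leftarrow)$, if $h(s)$ is a prefix of $h(s')$ and $s\ne\emptyset$, then $h(s)$ ends with a $1$ inside $h(s')$; since every $1$ in $h(s')$ sits at a block boundary, this forces $h(s)=h(s'_0)\concat\cdots\concat h(s'_{|s|-1})$, so $s\subseteq s'$ by injectivity of $h$ on each block. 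The second equivalence $[h(s)]\supseteq [h(s')]\Leftrightarrow h_0(s')\in [h(s)]$ is handled by the same prefix comparison: $h(s)$ being a prefix of $h_0(s')$ but not of $h(s')$ would force $h(s)$ to be strictly longer than $h(s')$ and to end with a $0$ (since the tail of $h_0(s')$ is $0$), contradicting that $h(s)$ ends with $1$ when $s\ne\emptyset$; the case $s=\emptyset$ is trivial.

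Finally (3) is a direct calculation. Write $h_0(s)\restrict k$ explicitly: if $k\le |h(s)|$ it is a prefix of $h(s)$, and if $k>|h(s)|$ it equals $h(s)\concat(0,\ldots,0)$ with $k-|h(s)|$ trailing zeroes. On the other hand $h(s\concat\ell)=h(s)\concat(0,\ldots,0,1)$ with $\ell$ zeroes. Thus as soon as $\ell\ge k-|h(s)|$, the word $h_0(s)\restrict k$ is a prefix of $h(s\concat\ell)$, giving $[h(s\concat\ell)]\subseteq[h_0(s)\restrict k]$. There is no real obstacle here; the whole fact is bookkeeping, and the only place where one has to be mildly careful is the injectivity of $h_0$, where one must use that $h(s)$ ends with $1$ for nonempty $s$ to prevent collisions between $h_0(s)$ and $h_0(s')$ of different lengths.
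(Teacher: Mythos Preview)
Your argument is correct in all three parts. The paper does not supply a proof of this fact at all: it is stated as a \emph{Fact} and left to the reader, presumably because the unary-with-terminator encoding makes everything routine, exactly as you observed. Your write-up fills this in cleanly; the only spot worth a second glance is the $(\Leftarrow)$ direction of the second equivalence in (2), and you handle it correctly by using that $h(s)$ ends in a $1$ for nonempty $s$, which forces $|h(s)|\le |h(s')|$ and hence $h(s)\subseteq h(s')$.
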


Intuitively we will use $h_0(s)$ as a \enquote{height} assigned to the sequence $s$: when we construct the compact metric space $K_0$, we will \enquote{lift} a copy of $C_s$ to height $h_0(s)$ for each $s\in\omega$. More precisely, this means the following:

\begin{definition}\label{kzfzdef}
Let $K_0$ be the closure of the set
\[\bigcup_{s\in\omega^{<\omega}} C_s \times \{h_0(s)\}\]
in the space $\mathbb{Z}^\omega \times \{0,1\}^\omega$ and let $f_0 : K_0\to\mathbb{Z}^\omega$ be the restriction of the projection $\mathbb{Z}^\omega \times \{0,1\}^\omega \to \mathbb{Z}^\omega$ to the set $K_0$.
\end{definition}

\begin{claim}$K_0$ is compact.
\end{claim}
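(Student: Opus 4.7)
The plan is to exhibit a compact superset of $K_0$ inside $\mathbb{Z}^\omega\times\{0,1\}^\omega$, so that being closed gives compactness for free.

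First I would note that by \autoref{csdef} every set $C_s$ is a subset of $\{0,1\}^\omega$, and obviously $h_0(s)\in\{0,1\}^\omega$ as well. Consequently
\[\bigcup_{s\in\omega^{<\omega}} C_s \times \{h_0(s)\} \subseteq \{0,1\}^\omega\times\{0,1\}^\omega.\]
Since $\{0,1\}^\omega$ is compact (and in particular closed) in $\mathbb{Z}^\omega$, the product $\{0,1\}^\omega\times\{0,1\}^\omega$ is a closed subset of $\mathbb{Z}^\omega\times\{0,1\}^\omega$. Therefore the closure $K_0$ of the above union, taken in $\mathbb{Z}^\omega\times\{0,1\}^\omega$, is still contained in $\{0,1\}^\omega\times\{0,1\}^\omega$.

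Finally, $\{0,1\}^\omega\times\{0,1\}^\omega$ is compact (being a product of two compact spaces), and $K_0$ is closed in it by construction. A closed subset of a compact Hausdorff space is compact, so $K_0$ is compact. There is no real obstacle here; the only point worth emphasizing is that one should not try to argue directly from closedness in the non-locally-compact ambient group $\mathbb{Z}^\omega\times\{0,1\}^\omega$, but instead use the bounded nature of the coordinates coming from the inclusions $C_s\subseteq\{0,1\}^\omega$.
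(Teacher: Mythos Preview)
Your proof is correct and follows essentially the same approach as the paper: both observe that $C_s\subseteq\{0,1\}^\omega$ forces $K_0\subseteq\{0,1\}^\omega\times\{0,1\}^\omega$, and then use that a closed subset of a compact set is compact. You have merely spelled out the intermediate step (that the product is closed, so the closure stays inside) in slightly more detail than the paper does.
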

\begin{proof}
According to \autoref{csdef}, $C_s\subseteq \{0,1\}^\omega\subseteq \mathbb{Z}^\omega$ for each $s\in\omega^{<\omega}$. This clearly implies that the closed set $K_0$ is a subset of $\{0,1\}^\omega\times\{0,1\}^\omega$, a compact set.
\end{proof}

To study the structure of the set $K_0$, we introduce
\[D_s = \bigcup_{\substack{\sigma\in\omega^{<\omega}\\\sigma\supseteq s}} C_\sigma \times \{h_0(\sigma)\} \quad \subseteq \mathbb{Z}^\omega \times \{0,1\}^\omega.\] 

Using this notation, the definition of $K_0$ can be written as $K_0=\overline{D_\emptyset}$. It is clear that if $s\supseteq s'$ then $D_s\subseteq D_{s'}$, and in particular $D_s\subseteq D_\emptyset \subseteq K_0$ holds for each $s\in\omega^{<\omega}$.

\begin{claim}\label{oldsclopen}
For each $s\in\omega^{<\omega}$,
\[\overline{D_s} = K_0\cap (\mathbb{Z}^\omega \times [h(s)]) = K_0\cap (C_s \times [h(s)])\]
and therefore $\overline{D_s}$ is relatively clopen in $K_0$.
\end{claim}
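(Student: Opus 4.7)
The plan is to prove both equalities by a cycle of inclusions, then deduce the clopen property from the shape of the right-hand side.

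For the inclusion $\overline{D_s}\subseteq K_0\cap(C_s\times[h(s)])$, I would note that for every $\sigma\supseteq s$, property (3) of \autoref{cstriv} gives $C_\sigma\subseteq C_s$, while the implication $s\subseteq\sigma\Rightarrow h_0(\sigma)\in[h(s)]$ from part (2) of \autoref{hsprop} gives $\{h_0(\sigma)\}\subseteq[h(s)]$. Hence $D_s\subseteq C_s\times[h(s)]$, and since this product is closed in $\mathbb{Z}^\omega\times\{0,1\}^\omega$, the closure $\overline{D_s}$ is contained in $C_s\times[h(s)]$; combined with $\overline{D_s}\subseteq K_0$ (which holds because $D_s\subseteq D_\emptyset\subseteq K_0$ and $K_0$ is closed), this yields the first inclusion. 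The inclusion $K_0\cap(C_s\times[h(s)])\subseteq K_0\cap(\mathbb{Z}^\omega\times[h(s)])$ is immediate.

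The substantive step is $K_0\cap(\mathbb{Z}^\omega\times[h(s)])\subseteq\overline{D_s}$. Fix $(x,y)$ in the left-hand side. Since $K_0$ is the closure of $\bigcup_{\sigma\in\omega^{<\omega}}C_\sigma\times\{h_0(\sigma)\}$, there is a sequence $(x^{(n)},h_0(\sigma_n))$ with $x^{(n)}\in C_{\sigma_n}$ converging to $(x,y)$. Because $y\in[h(s)]$ and $[h(s)]$ is (cl)open in $\{0,1\}^\omega$, the sequence $h_0(\sigma_n)$ eventually lies in $[h(s)]$; by the equivalence $[h(s)]\ni h_0(\sigma)\Leftrightarrow s\subseteq\sigma$ from part (2) of \autoref{hsprop}, this forces $s\subseteq\sigma_n$ for all sufficiently large $n$. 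For such $n$, the point $(x^{(n)},h_0(\sigma_n))$ belongs to $D_s$, so the limit $(x,y)$ lies in $\overline{D_s}$, completing the chain of inclusions.

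Finally, for the clopen conclusion: $[h(s)]$ is a basic clopen set in $\{0,1\}^\omega$, so $\mathbb{Z}^\omega\times[h(s)]$ is open (in fact clopen) in $\mathbb{Z}^\omega\times\{0,1\}^\omega$, whence $K_0\cap(\mathbb{Z}^\omega\times[h(s)])$ is relatively open in $K_0$. Since this set equals $\overline{D_s}$, which is also relatively closed in $K_0$, it is relatively clopen, as claimed. I do not expect any real obstacle here; the only place that requires slight care is extracting $s\subseteq\sigma_n$ from the convergence $h_0(\sigma_n)\to y\in[h(s)]$, which is handled cleanly by the openness of $[h(s)]$.
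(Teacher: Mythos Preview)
Your argument is correct and follows essentially the same route as the paper: both hinge on the clopenness of $\mathbb{Z}^\omega\times[h(s)]$ together with the equivalence $h_0(\sigma)\in[h(s)]\Leftrightarrow s\subseteq\sigma$ from \autoref{hsprop}(2). The only cosmetic difference is that the paper invokes the general fact $\overline{A\cap B}=\overline{A}\cap B$ for clopen $B$ (applied with $A=D_\emptyset$) to obtain $K_0\cap(\mathbb{Z}^\omega\times[h(s)])=\overline{D_s}$ in one stroke, whereas you unpack this via a convergent-sequence argument.
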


\begin{proof}
By property (2) of \autoref{hsprop}, $s\subseteq \sigma$ if and only if $h_0(\sigma)\in [h(s)]$, so
\[D_\emptyset \cap (\mathbb{Z}^\omega \times [h(s)]) = D_s.\]
Elementary calculations show that if $A, B$ are two subsets of a topological space and $B$ is clopen, then $\overline{A\cap B} = \overline{A} \cap B$. Using this for the clopen set $\mathbb{Z}^\omega \times [h(s)]$ yields that
\[K_0 \cap (\mathbb{Z}^\omega \times [h(s)]) = \overline{D_s}.\]
This clearly shows that $\overline{D_s}$ is relatively open in $K_0$.

To prove the second equality, notice that $C_\sigma\subseteq C_s$ for any sequence $\sigma \supseteq s$ and thus $D_s\subseteq C_s\times\{0,1\}^\omega$. Taking closure and then intersecting with $\overline{D_s} = K_0\cap (\mathbb{Z}^\omega \times [h(s)])$ yields that $\overline{D_s} = K_0\cap (C_s \times [h(s)])$, as stated.
\end{proof}

We will use the following lemma to prove that certain subsets of $K_0$ are meager (and in fact, nowhere dense):

\begin{lemma}\label{nwdensecond}
If $X\subseteq \mathbb{Z^\omega}$ satisfies that
\[ \{s'\in \omega^{<\omega} : X \cap C_{s'} = \emptyset \} \text{ is cofinal in $\omega^{<\omega}$},\]
then $f_0^{-1}(X)$ is nowhere dense in $K_0$.
\end{lemma}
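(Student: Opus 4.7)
The plan is to prove nowhere-density directly: I will show that every nonempty relatively open $U\subseteq K_0$ contains a nonempty relatively clopen subset $V\subseteq K_0$ disjoint from $f_0^{-1}(X)$, and I will produce $V$ in the form $\overline{D_{s'}}$ for a suitable $s'\in\omega^{<\omega}$ with $X\cap C_{s'}=\emptyset$. Since $K_0$ is the closure of $\bigcup_s C_s\times\{h_0(s)\}$, I first pick some $(c,h_0(s))\in U$ with $c\in C_s$, and then find natural numbers $N$ and $k\ge|h(s)|$ so that the basic clopen neighborhood
\[W=K_0\cap\bigl([c\restrict N]\times[h_0(s)\restrict k]\bigr)\]
is contained in $U$; the choice $k\ge|h(s)|$ forces $[h_0(s)\restrict k]\subseteq[h(s)]$, so by \autoref{oldsclopen} we have $W\subseteq\overline{D_s}$.

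To produce $s'$, I would apply \autoref{csprop} to the nonempty relatively open subset $C_s\cap[c\restrict N]$ of $C_s$, obtaining infinitely many $\ell\in\omega$ with $C_{s\concat\ell}\subseteq[c\restrict N]$. Part (3) of \autoref{hsprop} lets me choose one such $\ell$ large enough that also $[h(s\concat\ell)]\subseteq[h_0(s)\restrict k]$. Finally, the cofinality hypothesis on $X$ supplies $s'\supseteq s\concat\ell$ with $X\cap C_{s'}=\emptyset$; I set $V=\overline{D_{s'}}$, which by \autoref{oldsclopen} equals $K_0\cap(C_{s'}\times[h(s')])$ and is nonempty and relatively clopen in $K_0$.

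The remaining verification is routine: $V\subseteq W\subseteq U$ because $C_{s'}\subseteq C_{s\concat\ell}\subseteq[c\restrict N]$ by monotonicity of the $C$'s and the choice of $\ell$, while $[h(s')]\subseteq[h(s\concat\ell)]\subseteq[h_0(s)\restrict k]$ by part (2) of \autoref{hsprop} together with the other choice of $\ell$; and $V\cap f_0^{-1}(X)=\emptyset$ follows immediately from $f_0(V)\subseteq C_{s'}$ and $X\cap C_{s'}=\emptyset$. The main subtlety is aligning the two coordinates at once: horizontally, $C_{s'}$ must sit inside the $\mathbb{Z}^\omega$-factor of $U$ (handled by \autoref{csprop}), while vertically, $h(s')$ must extend the prefix of $h_0(s)$ defining $U$ (handled by part (3) of \autoref{hsprop}); both constraints can be satisfied simultaneously precisely because each permits $\ell$ to be taken arbitrarily large, after which the cofinality hypothesis on $X$ lets us extend further to reach $s'$ without losing either inclusion.
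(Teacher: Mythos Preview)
Your proof is correct and follows essentially the same approach as the paper's own proof: pick a point $(c,h_0(s))$ of $D_\emptyset$ inside $U$, shrink to a basic clopen box $[c\restrict N]\times[h_0(s)\restrict k]$, use \autoref{csprop} and part~(3) of \autoref{hsprop} to find an $\ell$ with $C_{s\concat\ell}\times[h(s\concat\ell)]$ inside the box, then extend via cofinality to $s'\supseteq s\concat\ell$ with $X\cap C_{s'}=\emptyset$ and take $V=\overline{D_{s'}}$. The only cosmetic difference is that you additionally require $k\ge|h(s)|$ to record $W\subseteq\overline{D_s}$, which is harmless but not actually used in the argument.
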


\begin{proof}
Let $U$ be an arbitrary nonempty, relatively open subset of $K_0$. Then $U$ can be written as $U= K_0 \cap V$ where $V$ is open (in $\mathbb{Z}^\omega \times \{0,1\}^\omega$).

$K_0 = \overline{D_\emptyset}$ and $K_0$ intersects the open set $V$, therefore $D_\emptyset$ also intersects $V$. This implies that there is a sequence $s\in\omega^{<\omega}$ and a point $x\in C_s$ such that $(x, h_0(s)) \in V$. 

As $V$ is an open set in the product space $\mathbb{Z}^\omega \times \{0,1\}^\omega$, there are $n, k\in\omega$ such that $[x\restrict n]\times[h_0(s)\restrict k]\subseteq V$.

As $C_s\cap [x\restrict n]$ is a nonempty relatively open subset of $C_s$, we may apply \autoref{csprop} to get an infinite set $L\subseteq \omega$ of indices such that $C_{s\concat \ell} \subseteq C_s \cap [x\restrict n]$ for each $\ell\in L$. Property (3) of \autoref{hsprop} implies we may choose an index $\ell\in L$ which also satisfies that $[h(s\concat \ell)] \subseteq [h_0(s)\restrict k]$.

Using the condition of the lemma we can find an $s'\in\omega^{<\omega}$ such that $s'\supseteq s\concat \ell$ and $X\cap C_{s'}=\emptyset$. \autoref{oldsclopen} states that $\overline{D_{s'}} = K_0\cap (C_{s'}\times [h(s')])$ is a relatively clopen subset of $K_0$. This implies that $f_0(\overline{D_{s'}}) \subseteq C_{s'}$, but then applying $X\cap C_{s'}= \emptyset$ yields $f_0^{-1}(X)\cap \overline{D_{s'}}=\emptyset$. Also notice that
\begin{align*}
\overline{D_{s'}} &= K_0 \cap (C_{s'}\times [h(s')]) \subseteq K_0\cap (C_{s\concat \ell}\times [h(s\concat\ell)]) \subseteq\\ 
&\subseteq K_0\cap ([x\restrict n] \times [h_0(s)\restrict k]) \subseteq K_0 \cap V = U.
\end{align*}

This means that in an arbitrary nonempty, relatively open subset $U$ of $K_0$ we found a (clearly nonempty) subset $\overline{D_{s'}}$ that is relatively open in $K_0$ and disjoint from $f_0^{-1}(X)$. This implies that $f_0^{-1}(X)$ is nowhere dense in $K_0$.
\end{proof}

\section{Construction of the example and proof of the main result}

Now we are ready to prove our main result:

\begin{restatemain}\label{re:mainthm}
In the abelian Polish group $\mathbb{Z}^\omega$, there exists a $G_\delta$ set $R$ that is Haar meager but not strongly Haar meager.
\end{restatemain}

\begin{proof}
Fix a map $t : \mathcal{K}(\mathbb{Z}^\omega)\to \mathbb{Z}^\omega$ which satisfies the conditions of \autoref{translateapart}. Recall that
\[H=\{(K,x) \in \mathcal{K}(\mathbb{Z}^\omega) \times \mathbb{Z}^\omega: x \in K\}\]
is a closed set and according to \autoref{translateapart} the map
\[T: H \to \mathbb{Z}^\omega,\quad T(K,x)=x+t(K)\] 
is a homeomorphism between $H$ and the closed set $F=T(H)\subseteq \mathbb{Z}^\omega$, and this set $F$ satisfies that
\[(K+t(K)+\{-1,0,1\}^\omega)\cap F = K+t(K)\]
for each $K\in\mathcal{K}(\mathbb{Z}^\omega)$. As
\[\{-1, 0, 1\}^\omega = \{0,1\}^\omega-\{0,1\}^\omega = C_\emptyset -C_\emptyset,\]
we can reformulate this fact:
\begin{fact}\label{cncnnote}
The set $F$ satisfies that
\[(K+t(K)+C_\emptyset-C_\emptyset)\cap F = K+t(K)\qquad\text{for each $K\in\mathcal{K}(\mathbb{Z}^\omega)$}.\]
\end{fact}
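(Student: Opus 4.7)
The plan is to observe that this fact is a direct reformulation of the ``moreover'' clause of \autoref{translateapart}, the only non-trivial input being the identity $\{-1,0,1\}^\omega = C_\emptyset - C_\emptyset$ as subsets of $\mathbb{Z}^\omega$. So the proof reduces to justifying this elementary set-theoretic identity and plugging it in.

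First I would recall from part (1) of \autoref{cstriv} that $C_\emptyset = \{0,1\}^\omega$, so the claim amounts to showing
\[\{0,1\}^\omega - \{0,1\}^\omega = \{-1,0,1\}^\omega\]
as subsets of $\mathbb{Z}^\omega$. The inclusion $\subseteq$ is immediate, since coordinatewise differences of $0$'s and $1$'s lie in $\{-1,0,1\}$. For the reverse inclusion, given $\varepsilon \in \{-1,0,1\}^\omega$ define $u,v\in\{0,1\}^\omega$ coordinatewise by $(u_n,v_n)=(0,0)$ if $\varepsilon_n=0$, $(u_n,v_n)=(1,0)$ if $\varepsilon_n=1$, and $(u_n,v_n)=(0,1)$ if $\varepsilon_n=-1$; then $\varepsilon = u - v$.

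Having established this, the stated fact follows immediately by substituting $C_\emptyset - C_\emptyset$ for $\{-1,0,1\}^\omega$ in the ``moreover'' conclusion of \autoref{translateapart}, namely $(K+t(K)+\{-1,0,1\}^\omega)\cap F = K+t(K)$, which holds for every $K\in\mathcal{K}(\mathbb{Z}^\omega)$. There is no genuine obstacle here; the point of isolating this as a numbered fact is purely to make the reformulated version available for citation in the main construction that follows, where compact sets of the form $K = K_0'$ interact with the witness space $K_0\subseteq \{0,1\}^\omega\times\{0,1\}^\omega$ built out of the sets $C_s$.
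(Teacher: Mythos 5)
Your proof is correct and takes essentially the same approach as the paper: the paper likewise derives \autoref{cncnnote} by substituting the identity $\{-1,0,1\}^\omega = \{0,1\}^\omega - \{0,1\}^\omega = C_\emptyset - C_\emptyset$ into the final clause of \autoref{translateapart}, stating that identity without further comment. Your coordinatewise verification of the identity merely fills in a detail the paper treats as immediate.
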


We will use the following definition:
\begin{definition}\label{tsdef}
If $s\in\omega^{<\omega}$, then let $T_s = \{t \in \mathbb{Z}^\omega : t + C_s \subseteq F\}$ denote the set of translations which move $C_s$ into $F$.
\end{definition}

\begin{claim}\label{tsprop}
$T_s$ is closed for each $s\in\omega^{<\omega}$.
\end{claim}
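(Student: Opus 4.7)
The plan is that this closedness is immediate from two facts already in hand: $F$ is closed in $\mathbb{Z}^\omega$ (established in \autoref{translateapart}), and the group operation of $\mathbb{Z}^\omega$ is continuous, so for every fixed $x$ the translation map $t\mapsto t+x$ is a homeomorphism and in particular the set $F-x=\{t\in\mathbb{Z}^\omega:t+x\in F\}$ is closed.

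Concretely, I would simply rewrite the definition of $T_s$ as an intersection of closed sets. The condition $t+C_s\subseteq F$ is equivalent to $t+x\in F$ for every $x\in C_s$, so
\[T_s \;=\; \bigcap_{x\in C_s}(F-x).\]
Each set on the right is closed by the observation above, and an intersection of closed sets is closed; this gives the claim.

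If one prefers a sequential argument (which may fit the style of the paper better), I would take a convergent sequence $t^{(n)}\to t^*$ with $t^{(n)}\in T_s$ and an arbitrary $x\in C_s$; then $t^{(n)}+x\in F$ for every $n$ by definition of $T_s$, and $t^{(n)}+x\to t^*+x$ by continuity of addition, so $t^*+x\in F$ since $F$ is closed. As $x\in C_s$ was arbitrary, $t^*+C_s\subseteq F$, i.e., $t^*\in T_s$.

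There is no real obstacle here — neither the compactness of $C_s$ nor the detailed structure of $C_s$ coming from \autoref{csdef} plays any role; the statement uses only that $F$ is closed and that $\mathbb{Z}^\omega$ is a topological group. The compactness of $C_s$ will presumably become essential in the subsequent claims about $T_s$ (for instance, when analyzing its interior or meagerness properties via \autoref{cncnnote}), but not for plain closedness.
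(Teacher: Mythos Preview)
Your proposal is correct, and in fact your sequential argument is exactly the paper's own proof: it fixes a convergent sequence in $T_s$, uses that $F$ is closed to pass to the limit coordinatewise in $t+c$, and concludes $t^*\in T_s$. The intersection formulation $T_s=\bigcap_{x\in C_s}(F-x)$ is just a repackaging of the same idea.
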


\begin{proof}
We prove that for an arbitrary $s\in\omega^{<\omega}$, the set $T_s$ contains all of its limit points. Assume that $t_i\in T_s$ for each $i\in\omega$ and $t^*=\lim_{i\to\infty} t_i$ exists. We know that if $c\in C_s$ and $i\in\omega$, then $t_i + c\in F$. As $F$ is closed, this implies that if $c\in C_s$, then $\lim_{i\to\infty} (t_i +c) = t^* + c\in F$. This shows that $t^*\in T_s$, concluding the proof.
\end{proof}

Using these sets of translations, we can define the set $R$:

\begin{definition}\label{rdef}
Let\[R=F \setminus \bigcup_{s\in\omega^{<\omega}}\bigcup_{\ell\in\omega} (T_s+C_{s\concat\ell}).\]
\end{definition}
It is clear from this definition that $R$ is $G_\delta$ and $R\subseteq F$.

The rest of the proof consists of two parts: We will first prove \autoref{rhm}, which will imply that $R$ is Haar meager, then we will prove \autoref{rnotshm}, which will imply that $R$ is not strongly Haar meager.

\begin{claim}\label{rhm}
For the compact metric space $K_0$ and function $f_0$ defined in \autoref{kzfzdef}, if $g\in\mathbb{Z}^\omega$, then $f_0^{-1}(R+g)$ is a nowhere dense subset of $K_0$. (Using the terminology of \cite{BGJS}, this states that $R$ is Haar nowhere dense.)
\end{claim}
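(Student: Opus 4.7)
The plan is to apply \autoref{nwdensecond} with $X = R+g$, so it suffices to prove that for every $s\in\omega^{<\omega}$ there exists $s'\supseteq s$ with $C_{s'}\cap (R+g)=\emptyset$, equivalently (setting $g' = -g$) with $(g'+C_{s'})\cap R=\emptyset$. I would split the analysis into two complementary cases depending on whether $g'+C_s$ is entirely contained in $F$.

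In the first case, $g'+C_s\subseteq F$, which by \autoref{tsdef} just means $g'\in T_s$. Picking any $\ell\in\omega$ and using $C_{s\concat\ell}\subseteq C_s$ from \autoref{cstriv}(3), we get $g'+C_{s\concat\ell}\subseteq T_s+C_{s\concat\ell}$, which is one of the terms subtracted from $F$ in \autoref{rdef}, hence disjoint from $R$. So $s'=s\concat\ell$ works immediately.

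In the remaining case, $g'+C_s\not\subseteq F$, so we can pick some $c\in C_s$ with $g'+c\notin F$. Since $F$ is closed by \autoref{translateapart}, there is an open neighborhood $U\subseteq\mathbb{Z}^\omega$ of $c$ whose translate $g'+U$ misses $F$. Because $C_s\cap U$ is nonempty and relatively open in $C_s$, \autoref{csprop} produces an $\ell\in\omega$ with $C_{s\concat\ell}\subseteq C_s\cap U$, and then $g'+C_{s\concat\ell}\subseteq g'+U$ is disjoint from $F\supseteq R$. Again $s'=s\concat\ell$ suffices.

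The only real content is spotting this clean dichotomy: either we translate $C_s$ entirely inside $F$, and then the definition of $R$ hands us the conclusion on any successor $s\concat\ell$; or some point of $C_s$ maps outside the closed set $F$, and \autoref{csprop} lets us shrink to a $C_{s\concat\ell}$ whose translate still misses $F$. There is no genuine obstacle — the hypothesis of \autoref{nwdensecond} is verified in either branch, and the claim follows at once.
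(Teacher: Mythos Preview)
Your proof is correct and follows essentially the same approach as the paper: both reduce to \autoref{nwdensecond}, then split on whether $-g+C_s\subseteq F$, handling the first case via $-g\in T_s$ and the definition of $R$, and the second case by using that $F$ is closed together with \autoref{csprop} to shrink to some $C_{s\concat\ell}$ whose translate misses $F$. The only cosmetic difference is that in the second case the paper works directly with the relatively open set $C_s\setminus(F+g)$ rather than first choosing a point and a neighborhood.
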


\begin{proof}
Fix an arbitrary $g\in\mathbb{Z}^\omega$. According to \autoref{nwdensecond}, it is sufficient to prove that
\[ \{s'\in \omega^{<\omega} : (R+g) \cap C_{s'} = \emptyset \} \text{ is cofinal in $\omega^{<\omega}$}.\]
We fix an arbitrary $s\in\omega^{<\omega}$ and prove that there exists a $s'\in\omega^{<\omega}$ such that $s'\supseteq s$ and $(R+g)\cap C_{s'}=\emptyset$. Clearly $(R+g)\cap C_{s'}=\emptyset$ if and only if $R \cap (C_{s'}-g) = \emptyset$.

We distinguish two cases:

\textbf{Case 1:} $C_s-g\subseteq F$.\\In this case \autoref{tsdef} implies that $-g\in T_s$. Pick an arbitrary $\ell\in\omega$ (for example, let $\ell=0$) and let $s'=s\concat \ell$. Then 
\[C_{s'}-g \subseteq T_s +C_{s'}=T_s+C_{s\concat\ell} \subseteq\bigcup_{\sigma \in\omega^{<\omega}}\bigcup_{\ell\in\omega} (T_\sigma +C_{\sigma \concat\ell})\] and therefore \autoref{rdef} implies that $(C_{s'}- g)\cap R=\emptyset$.

\textbf{Case 2:} $C_s-g\nsubseteq F$.\\In this case the set $U=C_s\setminus (F+g)$ is nonempty and relatively open in $C_s$ (because $F$ is a closed subset of $\mathbb{Z}^\omega$). Applying \autoref{csprop} we can select an index $\ell\in\omega$ such that $C_{s\concat\ell}\subseteq U$. This means that $s'=s\concat\ell$ is a good choice:
\[C_{s'}-g \subseteq U-g = (C_s -g) \setminus F \subseteq (C_s-g) \setminus R\]
using the fact that $R\subseteq F$.
\end{proof}

\begin{claim}\label{rnotshm}
If $K\subseteq \mathbb{Z}^\omega$ is a nonempty compact set, then there is an element $g\in\mathbb{Z}^\omega$ such that $(R+g)\cap K$ is a comeager subset of $K$.
\end{claim}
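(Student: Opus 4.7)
The plan is to take $g=-t(K)$. By \autoref{translateapart}, $K+t(K)\subseteq F$, so $K\subseteq F+g$, which yields
\[(R+g)\cap K = K\setminus\bigcup_{s\in\omega^{<\omega}}\bigcup_{\ell\in\omega} N_{s,\ell},\qquad N_{s,\ell}:=K\cap(T_s+C_{s\concat\ell}+g).\]
The union is countable, so it suffices to prove that each $N_{s,\ell}$ is nowhere dense in $K$.

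First I would reformulate $N_{s,\ell}$ using the ``translates apart'' property. For $y\in K$, the condition $y\in N_{s,\ell}$ means $y+t(K)=\tau+c$ for some $\tau\in T_s$, $c\in C_{s\concat\ell}$; since $\tau+C_s\subseteq F$ while $C_s-c\subseteq C_\emptyset-C_\emptyset$, \autoref{cncnnote} forces $y+t(K)+(C_s-c)\subseteq K+t(K)$, i.e., $(y-c)+C_s\subseteq K$. Setting $T'_s:=\{\tau\in\mathbb{Z}^\omega:\tau+C_s\subseteq K\}$, which is compact because $K$ is, we get $N_{s,\ell}=T'_s+C_{s\concat\ell}$, a closed subset of the closed set $T'_s+C_s\subseteq K$. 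Since a set contained in a closed subspace has empty interior in the ambient space whenever it has empty interior in that subspace, it is enough to show $N_{s,\ell}$ has empty interior in $T'_s+C_s$.

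For that, I would exploit that $C_{s\concat\ell}$ is nowhere dense in $C_s$: by \autoref{csprop} together with the construction in \autoref{bsdef}, any nonempty relatively open subset of $C_s$ contains sequences whose tails are $|s|$-segmented but never eventually $(|s|+1)$-segmented, and such sequences lie in $C_s\setminus\bigcup_{\ell'\in\omega} C_{s\concat\ell'}$. Given a nonempty relatively open $V\subseteq T'_s+C_s$ with a representative $\tau_0+c_0\in V$, I would choose $c\in C_s$ sufficiently close to $c_0$ with $c\notin\bigcup_{\ell'}C_{s\concat\ell'}$, and propose $y=\tau_0+c\in V$ as a candidate point outside $N_{s,\ell}$.

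The main obstacle is to rule out that $y$ nevertheless admits an alternative decomposition $y=\tau+c'$ with $\tau\in T'_s$, $c'\in C_{s\concat\ell}$. This is where the rigidity of $C_s$ enters: $C_s$ admits no nontrivial translational self-embedding ($\delta+C_s\subseteq C_s$ forces $\delta=0$, since in every free coordinate both binary values must be accommodated $|s|$-segmentedly), so any such alternative $\tau$ would be distinct from $\tau_0$ and produce a genuinely different translate $\tau+C_s\subseteq K$ meeting $\tau_0+C_s$ at $y$. Combining this with the non-eventually-segmented structure of $c$ against the $(|s|+1)$-segmented tail of $c'$ past $b_{s\concat\ell}$, one should be able to force a contradiction by examining the translational offset $\tau-\tau_0=c-c'$ and tracking how it must respect the segmentation pattern across the entire translate. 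I expect this bookkeeping of segmentation levels under the translational offset to be the delicate combinatorial core of the argument.
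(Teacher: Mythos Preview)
Your setup coincides with the paper's: you take $g=-t(K)$, reduce to showing each piece $N_{s,\ell}$ is nowhere dense in $K$, and your reformulation $N_{s,\ell}=T'_s+C_{s\concat\ell}$ with $T'_s=\{\tau:\tau+C_s\subseteq K\}$ is correct and matches what the paper implicitly uses via \autoref{cncnnote}. The reduction to showing empty interior inside the closed set $T'_s+C_s$ is also fine.

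The gap is in the construction of the point $y$. Choosing $c\in C_s$ merely on the basis that $c\notin\bigcup_{\ell'}C_{s\concat\ell'}$ does \emph{not} prevent $y=\tau_0+c$ from lying in $T'_s+C_{s\concat\ell}$. The obstruction you flag is real and your proposed bookkeeping cannot close it: from $y=\tau+c'$ you only get $\delta:=\tau-\tau_0=c-c'$, and nothing about membership in $T'_s$ forces $\delta$ to be eventually $(\lvert s\rvert{+}1)$-segmented. The rigidity statement $\delta+C_s\subseteq C_s\Rightarrow\delta=0$ is true but irrelevant, since you only have $\tau+C_s\subseteq K$ and $\tau_0+C_s\subseteq K$, not one translate inside the other. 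Concretely, for $s=\emptyset$ take $K=\{0,1\}^\omega\cup(\delta+\{0,1\}^\omega)$ with $\delta=c-c'$ for your chosen $c$ and some $c'\in C_\ell$; then $0,\delta\in T'_\emptyset$ and your candidate $y=c=\delta+c'\in T'_\emptyset+C_\ell$.

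The paper builds the point in a fundamentally $K$-dependent way: starting from any $u\in U$, it defines $x$ coordinate by coordinate, setting $x_j$ to be the \emph{minimum} of $\{y_j:y\in (K+t(K))\cap[x\restrict j]\}$ when $j\equiv 0\pmod{2^{\lvert s\rvert+1}}$ and the \emph{maximum} when $j\equiv 2^{\lvert s\rvert}\pmod{2^{\lvert s\rvert+1}}$. If $x=t^*+c^*$ with $t^*+C_s\subseteq K+t(K)$, one then compares $x$ at index $j$ with $x-c^*+r(j,0)$ (where $r(j,0)=(c^*\restrict j)\concat(0,0,\ldots)\in C_s$) to force $c^*_j\le 0$, and at index $j'=j+2^{\lvert s\rvert}$ with $x-c^*+r(j',1)$ to force $c^*_{j'}\ge 1$; the $(\lvert s\rvert{+}1)$-segmentation of $c^*$ then gives $c^*_j=c^*_{j'}$, a contradiction. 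The extremal choices against the \emph{current slice of $K$} are the missing idea; a choice of $c$ that ignores the shape of $K$ cannot succeed.
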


\begin{proof}
Fix a nonempty compact set $K\in\mathcal{K}(\mathbb{Z}^\omega)$. We will prove that $g=-t(K)$ satisfies that $(R-t(K))\cap K$ is a comeager subset of $K$. (Recall that we fixed a map $t$ that satisfies the conditions of \autoref{tsdef}.) As $x\mapsto x+t(K)$ is a homeomorphism from $K$ to $K+t(K)$, it is enough to prove that $R\cap (K+t(K))$ is a comeager subset of $K+t(K)$.

Recall that according to \autoref{rdef},
\[R=F \setminus \bigcup_{s\in\omega^{<\omega}}\bigcup_{\ell\in\omega} (T_s+C_{s\concat\ell}).\]
It is clear that $K+t(K)\subseteq F$, therefore
\begin{align*}
R\cap (K+t(K)) &= (K+t(K)) \setminus \bigcup_{s\in\omega^{<\omega}}\bigcup_{\ell\in\omega} (T_s+C_{s\concat\ell})=\\
&=\bigcap_{s\in\omega^{<\omega}}\bigcap_{\ell\in\omega}((K+t(K)) \setminus  (T_s+C_{s\concat\ell})).
\end{align*}
To prove that this countable intersection is comeager in $K+t(K)$, it is enough to prove that if we fix $s\in\omega^{<\omega}$ and $\ell\in\omega$, then the set $(K+t(K))\setminus (T_s+C_{s\concat\ell})$ is comeager in $K+t(K)$. This set is clearly relatively open, because $C_{s\concat\ell}$ is compact and \autoref{tsprop} states that $T_s$ is closed. Therefore it is enough to prove that this set is dense in $K+t(K)$. If we fix an arbitrary nonempty relatively open subset $U$ of $K+t(K)$, then we need to check that
\[U\cap ((K+t(K))\setminus (T_s+C_{s\concat\ell})) = U\setminus (T_s+C_{s\concat\ell}) \neq \emptyset.\]

Fix an arbitrary $u\in U$. As $U$ is relatively open, we may find an index $n\in\omega$ such that $[u\restrict n] \cap (K+t(K))\subseteq U$. We will use recursion to define a sequence $x\in K+t(K) \subseteq \mathbb{Z}^\omega$. First let 
\begin{multistmt}
\item $x\restrict n = u\restrict n$.
\end{multistmt}
After this we will select the elements of the sequence $x$ one by one. Assume that we already defined $x\restrict j$ for an index $j\ge n$ and
\begin{multistmt}[resume]
\item if $j\equiv 0\modulo{2^{\lvert s\rvert+1}}$, then let
\[x_{j} = \min \{y_{j} : y \in (K+t(K)) \cap [x\restrict j]\},\]
\item if $j\equiv 2^{\lvert s\rvert}\modulo{2^{\lvert s\rvert+1}}$, then let
\[x_{j} = \max \{y_{j} : y \in (K+t(K)) \cap [x\restrict j]\},\]
\item otherwise, choose an arbitrary element $x_j$ which satisfies that
\[x_{j} \in \{y_{j} : y \in (K+t(K)) \cap [x\restrict j]\}.\]
\end{multistmt}
It is easy to check that $(K+t(K))\cap [x\restrict j]$ remains nonempty during this procedure and therefore $x\in K+t(K)$. Note that in conditions (2) and (3) the minimum and maximum are well-defined because $(K+t(K)) \cap [x\restrict j]$ is compact.

Property (1) implies that $x\in U$, we wish to prove that $x\notin T_s+C_{s\concat\ell}$. Assume for the contrary that $x = t^* + c^*$ for some elements $t^*\in T_s$ and $c^*\in C_{s\concat\ell}$.

Recall that
\[T_s = \{t \in \mathbb{Z}^\omega : t+ C_s\subseteq F\}\qquad\text{(\autoref{tsdef})}\]
and
\[C_s = \{ b_s\concat x : x\in\{0,1\}^\omega\text{ and $x$ is $\lvert s\rvert$-segmented}\}\qquad\text{(\autoref{csdef})}.\]
$t^*\in T_s$ means that $t^*+C_s=x-c^*+C_s\subseteq F$. As $x\in K+t(K)$, $c^*\in C_{s\concat\ell}\subseteq C_\emptyset$ and $C_s\subseteq C_\emptyset$, we know that $x-c^*+C_s\subseteq K+t(K)+C_\emptyset-C_\emptyset$. According to \autoref{cncnnote},
\[(K+t(K) +C_\emptyset-C_\emptyset) \cap F = K+t(K),\]
therefore $t^*+C_s = x-c^*+C_s\subseteq K+t(K)$.

For each index $j\in\omega$, consider the sequences
\[r(j,0) = (c^*\restrict j) \concat (0,0,\ldots)\quad\text{and}\quad r(j,1) = (c^*\restrict j) \concat (1,1,\ldots)\]
consisting of the first $j$ elements of $c^*$, followed by zeroes and ones respectively. If $j\ge \lvert b_s\rvert$ and $j$ is divisible by $2^{\lvert s\rvert}$ then it is straightforward to check that $r(j,0), r(j,1)\in C_s$ (using \autoref{csdef} and the fact that $c^*\in C_{s\concat\ell}\subseteq C_s$).

Fix a $j\in \omega$ such that $j\equiv 0\modulo{2^{\lvert s\rvert+1}}$ and $j> \max\{n, \lvert b_{s\concat\ell}\rvert\}$. According to property (2) of $x$, 
\[x_{j} = \min \{y_{j} : y \in (K+t(K)) \cap [x\restrict j]\}.\]
Notice that $y=x-c^*+r(j,0)$ satisfies that $y \in x-c^*+C_s\subseteq K+t(K)$ (because $r(j,0)\in C_s$) and $y\in [x\restrict j]$ (because $c^*\restrict j = r(j,0)\restrict j$). This implies that
\[x_{j}\le y_{j} = (x-c^*+r(j,0))_{j}=x_{j}-c^*_{j}+0\quad\Rightarrow\quad c^*_{j} \le 0.\]

Now apply an analogous argument for the index $j'=j+2^{\lvert s\rvert}$: According to property (3) of $x$, 
\[x_{j'} = \max \{y_{j'} : y \in (K+t(K)) \cap [x\restrict j']\}.\]
Notice that $y=x-c^*+r(j',1)$ satisfies that $y \in x-c^*+C_s\subseteq K+t(K)$ (because $r(j',1)\in C_s$) and $y\in [x\restrict j']$ (because $c^*\restrict j' = r(j',1)\restrict j'$). This implies that
\[x_{j'}\ge y_{j'} = (x-c^*+r(j',1))_{j'}=x_{j'}-c^*_{j'}+1\quad\Rightarrow\quad c^*_{j'} \ge 1.\]

As $c^* = b_{s\concat\ell}\concat z$ for some $(\lvert s\rvert+1)$-segmented sequence $z\in\{0,1\}^\omega$ and the length of $b_{s\concat\ell}$ is divisible by $2^{\lvert s\rvert+1}$, we know that $c^*_{j}=c^*_{j+2^{\lvert s\rvert}}=c^*_{j'}$. This contradicts that $c^*_j\le 0$ and $c^*_{j'}\ge 1$, proving that our indirect assumption was incorrect.

We proved that $x\in U$ and $x\notin (T_s+C_{s\concat\ell})$, and this implies that $(K+t(K))\setminus (T_s+C_{s\concat\ell})$ is indeed a dense subset of $K+t(K)$.
\end{proof}
This concludes the proof of \autorefre{mainthm}.
\end{proof}

The following theorem shows that our $G_\delta$ counterexample is as simple as possible:

\begin{theorem}\label{mincompl}
If $G$ is an abelian Polish group and $A\subseteq G$ is an $F_\sigma$ Haar meager subset, then $A$ is strongly Haar meager.
\end{theorem}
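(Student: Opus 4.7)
The plan is to take the witness set for strong Haar meagerness to be simply the image $C = f(K)$ of any witness function $f \colon K \to G$ for the Haar meagerness of $A$. The whole argument will hinge on the observation that when $A$ is $F_\sigma$, we can split the shift $A + g$ into closed pieces, and a closed subset of a meager set in a Baire space is nowhere dense.

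More concretely, write $A = \bigcup_{n \in \omega} F_n$ with each $F_n$ closed. Since $A$ is itself Borel, \autoref{def:haarmeager} gives a compact metric space $K$ and a continuous function $f \colon K \to G$ such that $f^{-1}(A+g)$ is meager in $K$ for every $g \in G$ (one may take the Borel superset in the definition to be $A$, or use $f^{-1}(A+g) \subseteq f^{-1}(B+g)$). Put $C := f(K)$, a nonempty compact subset of $G$. I claim that $C$ witnesses the strong Haar meagerness of $A$. Fix $g \in G$ and $n \in \omega$. The set $f^{-1}(F_n+g)$ is closed in $K$ (by continuity of $f$ and closedness of $F_n + g$) and is contained in the meager set $f^{-1}(A+g)$, so it is a closed meager subset of the Baire space $K$, hence nowhere dense. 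If $(F_n+g) \cap C$ had nonempty interior in $C$, say $\emptyset \neq V \subseteq (F_n+g) \cap C$ with $V$ relatively open in $C$, then $f^{-1}(V)$ would be a nonempty open subset of $K$ (nonempty because $f$ surjects onto $C$) contained in the nowhere dense set $f^{-1}(F_n+g)$, a contradiction. Thus each $(F_n+g) \cap C$ is closed and nowhere dense in $C$, so
\[(A+g) \cap C = \bigcup_{n \in \omega} (F_n + g) \cap C\]
is meager in $C$.

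There is no serious obstacle to carry this out; the only subtlety worth stating explicitly is why closedness of the $F_n$ is essential: it is precisely what promotes ``meager preimage'' to ``nowhere dense preimage'', which is the only way to rule out nonempty open subsets of $C$ lying inside $F_n + g$. This is exactly the property that \autoref{exa:shmproofproblem} shows fails for general $G_\delta$ sets, which is consistent with our \autoref{mainthm} producing a $G_\delta$ counterexample.
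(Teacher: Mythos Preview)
Your proof is correct and follows essentially the same approach as the paper: take $C=f(K)$ for any witness function $f$, and use that the $F_\sigma$ structure lets one pass from meager preimage to empty interior. The only cosmetic difference is that the paper treats the translate $B=A+g$ as a single $F_\sigma$ set and invokes the equivalence ``$F_\sigma$ meager $\Leftrightarrow$ empty interior'' in both $K$ and $C$, whereas you decompose $A$ into closed pieces $F_n$ and argue that each $(F_n+g)\cap C$ is nowhere dense; unpacking the paper's equivalence yields exactly your piecewise argument.
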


\begin{proof}
Assume that the continuous map $f : K \to G$ witnesses that $A$ is Haar meager (where $K$ is a nonempty compact metric space). We show that the compact set $C=f(K)\subseteq G$ witnesses that $A$ is strongly Haar meager. Assume that the set $B$ is a translate of $A$ (that is, $B= A+g$ for some $g\in G$). It is clearly enough to prove that $B \cap C$ is meager in $C$.

We will use the following facts which are all well-known and easy to prove:
\begin{multistmt}
\item an $F_\sigma$ set is meager if and only if it has empty interior,
\item the preimage of an $F_\sigma$ set under a continuous function is also $F_\sigma$,
\item if the preimage of a set $X$ under a continuous function has empty interior, then the set $X$ itself has empty interior relative to the image of the function.
\end{multistmt}

As $B$ is an $F_\sigma$ set, (2) yields that $f^{-1}(B)$ is also $F_\sigma$. As $f$ was a witness function, $f^{-1}(B)$ is meager, but then (1) yields that $f^{-1}(B)$ has empty interior. But $f^{-1}(B)=f^{-1}(B\cap C)$ because $C$ is the image of $f$, therefore (3) yields that $B\cap C$ has empty interior relative to $C$. Using (1) again ($B\cap C$ is an $F_\sigma$ subset of $C$), this yields that $B\cap C$ is indeed a meager subset of $C$.
\end{proof}

\begin{remark}
This proof also works when the Polish group $G$ is not necessarily abelian. For the definition of Haar meager and strongly Haar meager sets in this more general setting, see e.g. the survey paper \cite{EN}.
\end{remark}

\section{Open questions}

As we answered \autoref{que:HMeqSHM} negatively, constructing a Haar meager but not strongly Haar meager set in $\mathbb{Z}^\omega$, only the following open-ended question remains of \autoref{que:HMeqSHM}:

\begin{question}
What can we say about the (abelian) Polish groups where every Haar meager set is strongly Haar meager?
\end{question}

Notice that both \autoref{bgjsexample} and \autoref{mainthm} studied groups that can be written as countable products of countable discrete groups. In fact, our ideas allow us to describe the situation in this frequently studied, simple class of Polish groups:

\begin{claim}
Consider a group $G = \prod_{i\in\omega} G_i$ where each $(G_i, +)$ is a countable abelian group endowed with the discrete topology. Then the following are equivalent:
\begin{multistmt}
\item every Haar meager subset of $G$ is strongly Haar meager,
\item every compact subset of $G$ is contained in a locally compact subgroup,
\item for all but finitely many $i\in\omega$ the group $G_i$ is a torsion group.
\end{multistmt}
\end{claim}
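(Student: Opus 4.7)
My plan is to establish the three-way equivalence via the easy equivalence $(2)\Leftrightarrow(3)$ together with the implications $(3)\Rightarrow(1)$ and $\lnot(3)\Rightarrow\lnot(1)$.

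The equivalence $(2)\Leftrightarrow(3)$ is elementary: for any compact $K\subseteq G$, each projection $\pi_i(K)\subseteq G_i$ is finite (compact in a discrete countable group), so $\langle\pi_i(K)\rangle$ is finite whenever $G_i$ is torsion. Under $(3)$, $K$ sits inside the closed locally compact subgroup $\prod_i\langle\pi_i(K)\rangle$. Conversely, if $G_i$ is non-torsion for $i$ in an infinite set $I$, pick $a_i\in G_i$ of infinite order; then $K=\prod_{i\in I}\{0,a_i\}\times\prod_{i\notin I}\{0\}$ is a compact set such that any closed subgroup containing $K$ must contain the closed copy $\prod_{i\in I}\mathbb{Z}a_i\cong\mathbb{Z}^\omega$, which is not locally compact.

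For $(3)\Rightarrow(1)$, decompose $G=D\times T$ where $D=\prod_{i<N}G_i$ is countable discrete (the finitely many non-torsion factors) and $T=\prod_{i\geq N}G_i$ is hull-compact. Given Haar meager $A\subseteq G$ with witness $f=(f_1,f_2):K\to D\times T$, the finiteness of $f_1(K)$ and a standard fibering argument (using the clopen pieces $f_1^{-1}(d^*)$ and varying the $D$-coordinate of the translation) show that each fiber $A_m=\{t:(m,t)\in A\}$ is Haar meager in $T$, witnessed by $f_2\restrict f_1^{-1}(d^*)$. The Borel Haar meager set $A^*=\bigcup_{m\in D}A_m\subseteq T$ then admits a compact SHM witness $C_T\subseteq T$ by \autoref{bgjsthm}, and $C=\{0_D\}\times C_T$ is a compact SHM witness for $A$ in $G$: for any $g=(g_1,g_2)$, $(A+g)\cap C=\{0_D\}\times((A_{-g_1}+g_2)\cap C_T)\subseteq\{0_D\}\times((A^*+g_2)\cap C_T)$ is meager in $C$.

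For $\lnot(3)\Rightarrow\lnot(1)$, let $H=\prod_{i\in I}\mathbb{Z}a_i\cong\mathbb{Z}^\omega$ be the closed subgroup of $G$ from above, and apply \autoref{mainthm} inside $H$ to obtain $R\subseteq H$ that is Haar meager but not strongly Haar meager in $H$. Writing $G\cong G_I\times G_{I^c}$ with $G_I=\prod_{i\in I}G_i$, lift $R$ to the cylinder $R'=\pi_I^{-1}(R)\subseteq G$, a $G_\delta$ set. Since $(R'+g)=\pi_I^{-1}(R+\pi_I(g))$ depends only on $\pi_I(g)$ and $f_0(K_0)\subseteq H$, the witness $f_0:K_0\to H\subseteq G$ from \autoref{mainthm} gives $f_0^{-1}(R'+g)=f_0^{-1}(R+\pi_I(g))$, which is empty when $\pi_I(g)\notin H$ and meager in $K_0$ otherwise; hence $R'$ is Haar meager in $G$. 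To show $R'$ is not strongly Haar meager in $G$, I follow the strategy of \autoref{rnotshm}: for any compact $C\subseteq G$ I aim to produce $g\in G$ with $(R'+g)\cap C$ comeager in $C$. Setting $B=\pi_I(C)\subseteq G_I$ and running an adaptation of \autoref{rnotshm} inside $G_I$—replacing the integer $1$ by $a_i$ in each coordinate $i\in I$ to drive the min/max coordinate-selection—yields $h\in G_I$ with $(R+h)\cap B$ comeager in $B$. Then $(R'+(h,0))\cap C=(\pi_I\restrict C)^{-1}((R+h)\cap B)$ should be comeager in $C$.

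The main obstacle lies in this final transfer: continuous surjections do not in general preserve category in the desired direction, so comeagerness of $(R+h)\cap B$ in $B$ does not automatically lift to comeagerness of its preimage under $\pi_I\restrict C$ in $C$. My planned resolution uses zero-dimensionality (\autoref{kxzerodim}) to obtain a continuous clopen section $s:B\to C$ of $\pi_I\restrict C$, so that $s(B)\subseteq C$ is a homeomorphic copy of $B$ on which the comeager set lifts directly; a subsequent clopen decomposition of $C$ along the $G_{I^c}$-direction then extends the lifted comeagerness to all of $C$. An alternative that avoids this transfer altogether is to replay the coordinate-selection construction of \autoref{rnotshm} directly inside $G$: at each $i\in I$ take the min/max of the $i$-th coordinate with respect to the order on $\mathbb{Z}a_i$, while imposing no constraint on the $i\notin I$ coordinates, producing the required $g\in G$ directly.
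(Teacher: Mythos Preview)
Your treatment of $(2)\Leftrightarrow(3)$ and $(3)\Rightarrow(1)$ is sound. The paper does not spell out its own argument, but the remark following the claim indicates that the authors derive $(2)\Rightarrow(1)$ by adapting the proof of \autoref{bgjsthm}; your reduction via the splitting $G=D\times T$ with $T$ hull-compact is a clean alternative that reaches the same conclusion.

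The direction $\lnot(3)\Rightarrow\lnot(1)$, however, has a genuine gap that is more serious than the category-transfer issue you flag. The set $R'=\pi_I^{-1}(R)$ you construct can in fact be \emph{strongly} Haar meager in $G$, so it is not a counterexample at all. The point is that $R\subseteq F\subseteq H$, hence every translate $R'+g$ is contained in a single coset of the closed subgroup $H\times G_{I^c}$; but a compact set $C\subseteq G$ may meet many such cosets. Concretely, take $G_i=\mathbb{Z}^2$ for all $i$ (so $I=\omega$, $G_{I^c}$ is trivial, $R'=R$), choose $a_i=(1,0)$ so that $H=(\mathbb{Z}\times\{0\})^\omega$, and let $C=\{(0,0),(0,1)\}^\omega$. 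Any two distinct points of $C$ lie in distinct cosets of $H$, so $(R'+g)\cap C$ contains at most one point for every $g$, and $C$ witnesses that $R'$ is strongly Haar meager.

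Neither of your proposed repairs touches this. A continuous section $s:B\to C$ cannot help when $(R+h)\cap B$ is already contained in a meager subset of $B$ for every $h$, and ``replaying \autoref{rnotshm} directly inside $G$'' presupposes that the ambient objects $F$, $t$, $T_s$, $C_s$ live in $G$ (or at least in $G_I$), whereas in your construction they live only in the thin subgroup $H$. What is actually required---and what the paper states it does---is to generalize \autoref{translateapart} and the entire construction of $R$ to the group $G$ itself, so that $F$ absorbs \emph{every} compact subset of $G$ (up to translation) and the sets $C_s$ are built from sequences valued in the finite projections of that compact set rather than in $\{0,1\}$. Only then does the argument of \autoref{rnotshm} go through for an arbitrary compact $C\subseteq G$.
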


We do not include a proof of this claim, as our proof involves proving generalizations of \autoref{translateapart} and \autoref{rnotshm} that are more complicated and harder to understand, but do not require additional interesting ideas.

\begin{remark}
In the previous claim the implication $(2)\Rightarrow (1)$ remains true for any abelian Polish group $G$. This can be proved by slightly modifying the proof of \cite[Theorem 5.13]{BGJS}.
\end{remark}

In addition to \autoref{que:HMeqSHM} the paper \cite{Da} contains another problem about strongly Haar meager sets, \cite[Problem 3]{Da}:

\begin{question}[Darji]
Does the collection of strongly Haar meager sets form a $\sigma$-ideal?
\end{question}

In fact, even the following variant of this question seems to be open:

\begin{question}
Does the collection of strongly Haar meager sets form an ideal? (Or equivalently, is the union of two strongly Haar meager sets strongly Haar meager?)
\end{question}

As the Haar meager sets form a $\sigma$-ideal, the interesting case for these questions is studying groups that contain Haar meager, but not strongly Haar meager sets.

\end{document}